

\documentclass[ECP]{ejpecp} 



\usepackage[utf8]{inputenc}
\usepackage{hyperref} 
\usepackage[mathscr]{eucal}
\usepackage{srcltx} 
\usepackage{dsfont}
\usepackage{enumerate}
\usepackage{lscape}
\usepackage{tabularx}
\usepackage{xcolor}

\usepackage{tommasos-commands}
\usepackage[normalem]{ulem}

\newcommand{\bp}{\boldsymbol{\varphi}}


\SHORTTITLE{Wave speed for jump FKPP}

\TITLE{The wave speed of an FKPP equation with jumps via coordinated branching}



\AUTHORS{%
  Tommaso Rosati\footnote{University of Warwick, UK,
    \EMAIL{t.rosati@warwick.ac.uk}}
  \and 
  András Tóbiás\footnote{Department of Computer Science and Information Theory, Budapest University of Technology and Economics, Műegyetem rkp. 3., 1111 Budapest, Hungary, and Alfréd Rényi Institute of Mathematics, Reáltanoda utca 13-15, 1053 Budapest, Hungary. \EMAIL{tobias@cs.bme.hu} }}



\KEYWORDS{60H15; 60J80; 35C07; 92D25.} 

\AMSSUBJ{Fisher-KPP; Coordinated branching; Wave speed; Moment duality.} 

\SUBMITTED{January 20, 2022} 
\ACCEPTED{December 13, 2014} 




\VOLUME{0}
\YEAR{2020}
\PAPERNUM{0}
\DOI{10.1214/YY-TN}


\ABSTRACT{We consider a Fisher--KPP equation with nonlinear selection
driven by a Poisson random measure.
We prove that the equation admits a unique wave speed $ \mf{s}> 0 $ given by
$$ \frac{\mf{s}^{2}}{2} = \int_{[0, 1]}\frac{ \log{(1 + y)}}{y} \mf{R}( \ud y)
\;,$$
where $ \mf{R} $ is the intensity of the impacts of the driving noise. 
Our arguments are based on upper and lower bounds via a quenched duality with a
coordinated system of branching Brownian motions.
}



\numberwithin{equation}{section}
 

\def\emptyset{\varnothing} 


\def\d{{\rm d}} 
\def\e{\varepsilon} 
 




\newfam\Bbbfam 
\font\tenBbb=msbm10 
\font\sevenBbb=msbm7 
\font\fiveBbb=msbm5 
\textfont\Bbbfam=\tenBbb 
\scriptfont\Bbbfam=\sevenBbb 
\scriptscriptfont\Bbbfam=\fiveBbb

\def\2{\mathbf 2}


\newcommand{\R}     {\mathbb{R}} 
 
\newcommand{\N}     {\mathbb{N}} 
\renewcommand{\P}   {\mathbb{P}} 
 
\newcommand{\E}     {\mathbb{E}}

\def\1{{\mathchoice {1\mskip-4mu\mathrm l}      
{1\mskip-4mu\mathrm l} 
{1\mskip-4.5mu\mathrm l} {1\mskip-5mu\mathrm l}}} 
 

\makeatletter
\def\@endtheorem{\endtrivlist}
\makeatother

\newtheoremstyle{thm}{2ex}{2ex}{\itshape\rmfamily}{} 
{\bfseries\rmfamily}{}{1.7ex}{} 
 
\newtheoremstyle{rem}{1.3ex}{1.3ex}{\rmfamily}{} 
{\itshape\rmfamily}{}{1.5ex}{}

 
 

 



\renewcommand{\d}{{\rm d}} 
 
\newcommand{\eps}{\varepsilon}



\newcommand\numberthis{\addtocounter{equation}{1}\tag{\theequation}}
\renewcommand{\e}   {{\operatorname e }}



\begin{document}





\setcounter{section}{0}

\section{Introduction}\label{sec-intro}

The Fisher-KPP equation is a classical model in spatial population genetics 
\begin{equation}\label{eqn:fkpp-deterministic} \partial_t u_t = \frac{1}{2} \Delta u_t + \mathbf r \, u_t(1-u_t), \quad
u(0, x) = u_{0}(x) \in [0, 1], \quad (t, x) \in [0, \infty) \times \RR \;,
\end{equation}
which describes the evolution of the density of one favoured genetic type over
another disadvantaged one, where the advantage is given by a selection force of
strength $ \mathbf{r}> 0 $.

Instead of the classical equation, this work is concerned with the
analysis of a Fisher-KPP model in which selection acts at discrete jump times.
We fix a positive measure $ \mf{R} $ on $ [0, 1] $ and denote with $ \mR$ a
Poisson random measure on $ [0, \infty) \times (0, 1] $ with intensity $
\ud t \otimes \frac{1}{y} \mf{R}( \ud y) $. Then we consider the following equation driven
by $ \mR $:
\begin{equation}\label{eqn:fkpp}
  \ud u_{t} = \frac{1}{2} \Delta u_{t} \ud t + \mathfrak{r} \, u_t (1-u_t) \d t +
\int_{(0,1]} y u_{t-}(1 -
  u_{t-}) \mR (\ud t, \ud y), \quad u(0, x) = u_{0}(x) \;,
\end{equation}
 for $ (t, x) \in [0, \infty) \times \RR$\color{black}, where $ \mf{r} = \mf{R}(\{ 0 \}) $ corresponds to the continuous component of
the equation. The biological motivation behind the choice of such a noise is to model \emph{rare
selection}, or better strong temporary selective advantages of fit individuals due
to extreme behaviour of a random environment, as opposed to the
classical, constantly present but \emph{weak} selection corresponding to models
with continuous forcing (represented e.g.\ by the second term on the right-hand side of \eqref{eqn:fkpp})\color{black}. Such strong evolutionary
events involve a macroscopic portion of the underlying
population. They are therefore linked to some form of ``coordination'' between
individuals. We will elaborate on concrete biological examples of rare selection and mathematical models of coordination in Section~\ref{sec-rareselection}.\color{black}

The purpose of this article is to focus on a prominent
dynamical feature of the Fisher-KPP equation -- its wave speed -- and attempt
a first description of how it is affected by extreme selection events.
For the classical equation it is well known that there exists a travelling wave solution of speed
$\sqrt{2\mathbf r}$, that captures the asymptotic evolution of the front
of an invading gene, say when the initial distribution is of the form {$
u_{0}(x) = \mathds 1_{(- \infty, 0]} (x) $}.  Following the convention that  $\int_{\{0\}} \log(1+y) \frac{1}{y} \mf R(\ud
y) = \mf{R}(\{ 0 \}) = \mf{r}$, our \color{black} main result states that the wave speed $
\mf{s}>0 $ of the stochastic equation is given by
\begin{equation}\label{eqn:intro-wave-speed}
  \frac{\mf{s}^{2}}{2} = \int_{[0,1]} \log{(1+y)} \frac{1}{y} \mf{R}( \ud
  y)\;,
\end{equation}
and therefore shows quantitatively how extreme selection events slow down the invading
speed, compared to the deterministic equation \eqref{eqn:fkpp-deterministic}
with $ \mathbf{r} = \mf{R}([0,1]) $ (this is the natural
choice because it is consistent with our definitions in the case $ \mf{R} =
\mf{r} \delta_{0}$): that the wave speed is strictly smaller than the
deterministic one follows for instance by averaging \eqref{eqn:fkpp} and using
Jensen's inequality.  

The size of the gap
between the speed $ \mf{s} $ of the stochastic equation and the speed $
\sqrt{2 \mathbf{r}} $ of the associated deterministic one depends on the nature
of the noise. In a so-called \emph{pushed} regime \cite{Kueh} (for example in presence of a genetic drift
term) the effect can be surprisingly strong also for small noise, as demonstrated in the seminal
work by Mueller, Mytnik and Quastel \cite{MuellerMytnikQuastel}. In our case,
the nonlinearity is smooth and concave: We are in the
\emph{pulled} regime, where the effect of noise is weaker and most importantly
the speed of the wave is governed by the linearisation of the equation near $
u = 0 $.

Now, the Lyapunov exponent of the linearised equation is in turn
described the long-time behaviour of the dual process and the wave speed can be
easily rewritten as the speed of the rightmost particle of the dual. This
correspondence is very well understood: For the deterministic equation the dual
is given by a Branching Brownian Motion (BBM)
\cite{Ikeda3,McKean}. In the BBM each
particle moves as an
independent Brownian motion and branches into two
identical offspring at a constant rate $\mathbf r$
(see~\cite{Ikeda1,Ikeda2,Ikeda3}). 
In our setting, the dual is given by a \emph{coordinated branching Brownian motion}
(CBBM), see also the discussion of existing literature below. The main difference
with respect to the BBM is that particles tend to
reproduce simultaneously rather than independently: If $n$
particles are alive at a given time, for any $1 \leq k \leq n$, any $k$-tuple of
particles decides to simultaneously produce one offspring per particle at rate
$\int_{[0,1]} y^k (1-y)^{n-k} \frac{1}{y} \mathfrak R(\d y)$.

To study the speed of the rightmost particle of the CBBM, we consider a general 
approach developed by Kyprianou and Englaender
\cite{Kyprianou, Englander} (cf. also the references therein), which uses a
martingale argument to study the local survival of branching (or super-)
processes. As a rule-of-thumb, it states that the speed of the rightmost
particle equals $ \sqrt{2 \lambda } $, where $ \lambda $ is the Lyapunov exponent
of the underlying system.
A subtlety when applying their argument in our setting is that the \emph{quenched} (so conditional on the realisation of the jumps) growth rate of the number of particles of the CBBM and
\emph{annealed} growth rate (of the expected number of particles of the CBBM, where the expectation is taken also over the random environment) might differ: An environment exhibiting such
a behaviour is called \emph{strongly catalytic}. While in the classical BBM the almost sure and the expected growth rate coincide, our coordinated process is strongly catalytic. This imposes a
fundamental new challenge to our analysis, as the correct prediction for the wave
speed appears by formally using the rule-of-thumb with the \emph{quenched} Lyapunov exponent $ \lambda =
\frac{\mf{s}^{2}}{2} $, as in \eqref{eqn:intro-wave-speed} (note that this
corresponds to the growth rate of the total mass of the CBBM), while an attempt to
make the martingale argument rigorous breaks down, because the gap between
quenched and annealed implies that now the martingales at hand are not uniformly
integrable. 
We observe that this issue is related to the gap between the
stochastic and the deterministic equation we already addressed, as the annealed
Lyapunov exponent is given by $ \mathbf{r} = \mf{R}([0, 1]) $ (which is a
strict upper bound on our speed, unless $ \mf{R} = \mf{r} \delta_{0} $).

Our approach to overcome this problem is to distinguish
between `large reproduction events' in which individuals participate in an
event with probability $y \in (\delta,1]$, and `small reproduction events' where
individuals participate with probability $y \in (0,\delta]$, for some $\delta
\in (0,1]$. Then we proceed to obtain upper and lower bounds on the speed $
\mf{s} $, which depend on $ \delta $ but converge to the correct speed as $
\delta \downarrow 0$. For the upper bound, we use a quenched dual, where we
condition on the location and impact of `large' reproduction events. We then
use the martingale argument outlined above to deal with small jumps, which now
affect the speed only by \color{black} a factor $ \mf{R}((0, \delta]) $. Instead, for large
reproduction events we use time changes and a ``channeling'' argument based on
elementary large-deviation estimates to obtain the expected contribution to the
speed. For the lower bound we use comparison to remove the mass of $
\mf{R} $ in the interval $ (0, \delta] $ and then can proceed with similar
calculations to those we use the upper bound.

Overall, the novelty of our work consists in quantifying the effect of extreme
selective events on the speed of invasion of the favoured of two genetic types.
In future, we hope to extend these results to a much broader class of models,
potentially including dormancy \cite{BHN20, BJN21}, mutation, genetic drift and spatially
localized selective events \cite{BEV}. Finally, we note that we only consider
the highest order (linear) term in the wave speed.
For the original Fisher-KPP equation many, more refined results are available
\cite{CHL,Lalley-Sellke} and would be interesting to extend to the present
setting. Similarly, the existence of a (generalized) travelling wave (and not
  just the speed of propagation) is left open.

\subsection{Related literature}\label{sec-rareselection}

Recently, the study of the effect of extreme selective or
reproductive events on evolutionary models has seen a flurry of activity. 
An archetypal non-spatial model for such an evolution is the $ \Lambda $--coalescent, in
which a measure $ \Lambda $, corresponding to our $ \mf{R} $, determines the
proportion of individuals participating in a merger event \cite{pitman,
Sagitov}: see also \cite{Griffeath} for one of the first examples of
coordinated reproduction in the context of contact
processes, and \cite{GKT} (and the references therein) for a general
framework regarding coordination in reproduction, death and migration. In the
study of non-spatial models, extreme selection and reproduction events -- which are
in correspondence via duality -- have been recently addressed by
\cite{BCM, CV19, GCSW}. In the study of spatial models such as superprocesses, the effects of selection have been analysed for example in \cite{PR, KR}.

For example, Cordero and V\'echambre \cite{CV19} derive an analogue of our
equation, with genetic drift and no spatial component, as the scaling limit of a
microscopic particle system and study its long-time
behaviour (similar scaling limits and results have been obtained in \cite{BCM,
GCSW} in related models). Although these works do not consider our spatial setting, they share key
aspects of our approach. In particular, the use of duality and the study of the
long-time behaviour of the processes through conditioning or averaging
over the environment are essential in our arguments.

In more detail, the most well-known population-genetic example of moment duality is the one between the Wright--Fisher diffusion (without selection) and the Kingman coalescent. The Wright--Fisher diffusion appears as the scaling limit of the relative frequency of a neutral allele in the Wright--Fisher model, which runs forwards in time, whereas the Kingman coalescent describes the ancestry of a sample of the haploid and asexually reproducing individuals of the Wright--Fisher model backwards in time. While the introduction of selection into the forwards-in-time process is straightforward, the existence of a moment dual was not known before Krone and Neuhauser \cite{KN97} introduced this process, called the \emph{ancestral selection graph}. In this graph, while random genetic drift still leads to mergers of ancestral lines in the dual, selection makes ancestral lines branch into multiple potential parents. For example, if the model consists of just one weak and one strong allele, both potential parents of a particle with a weak allele type must be themselves of a weak type. The moment duality between the classical BBM and the solution to the classical FKPP equation can be interpreted similarly; this is a spatial model without random genetic drift, where the forwards-in-time process is deterministic, and the Brownian particles of its dual exhibit branching only. If one introduces a rare selection governed by a Poisson point process just as in Equation~\eqref{eqn:fkpp}, then the corresponding part of the dual process will be governed by the same Poisson point process. Similarly to the Wright--Fisher diffusion with selection, if $(t,y)$ is a point belonging to the Poisson point process, then the forwards-in-time interpretation of the model is that at time $t$ a fraction $y$ of individuals, chosen uniformly at random, are participating in a large selective resampling event. On the other hand, as in the the ancestral selection graph, backwards-in-time this corresponds to a large scale branching event, in which each particle  participates with probability $y$.

As also mentioned in \cite{CV19,GCSW}, examples of experimental studies on rare selection can be found in \cite{lizard,antibiotic}. In \cite{lizard}, lizards with long fingers can hold on stronger and thus avoid being blown away whenever their habitat is hit by a hurricane, which provides them a strong but temporary selective advantage. Further, \cite{antibiotic} compares different antibiotic treatment strategies against a bacterial population. Here, the analogue of a continuously present but weak selective pressure is a constant administration of the antibiotic in low concentration dosage, while rare and strong selective events correspond to a less frequent inoculation with higher dosages (possibly of varying concentration and at random times).\color{black}

We also note that extreme evolutionary events in a spatial \color{black} setting have received much attention over the
past years in relation to the study of spatial $ \Lambda$--Fleming--Viot
(SLFV) models introduced by Barton, Etheridge and V\'eber \cite{BEV}. Unlike
our equation, in this class of processes reproductive events are localized in
space, which is a natural assumption and an
interesting direction for future extensions of our result.

After completion of the present paper, we learned that the speed of the rightmost particle of the CBBM can be computed also via the results of~\cite{MM18} on branching random walks in a time-inhomogeneous random environment, using different tools. 

Together with our results on well-posedness of \eqref{eqn:fkpp} its duality with respect to the CBBM, this provides an alternative proof of the wave speed of~\eqref{eqn:fkpp}.

\subsection{Structure of the paper}
This article is divided as follows. In Section~\ref{sec:model} we present our model and in
Section~\ref{sec:results} we state our main results, along with the crucial
points of their proofs.
The technical details of the proofs are carried out in the rest of the paper.
 In Section~\ref{sec:existenceduality} we
prove the existence and uniqueness of strong solutions to~\eqref{eqn:fkpp} as
well as (quenched and annealed) duality.
These results do not come as a surprise, but require a proof and a precise
statement. Section~\ref{sec:wavespeed} is devoted to upper and lower bounds on the wave
speed via quenched duality arguments.

\subsection{Notations}

We write $ \NN = \{ 1, 2, \dots \} $ and denote $[n]=\{1,\ldots,n\}$ for any $n
\in \N$.  Furthermore, let $\mM$ be
the space of finite positive Borel measures on $[0,1]$ with the topology
of weak convergence. For a set $ \mX $ and two functions $ f, g \colon \mX 
\to \RR $ we write $ f \lesssim g $ if there exists a constant $ c > 0 $ such
that $ f(x) \leqslant c g(x) $ for all $ x \in \mX $. If the constant $c$ depends on some parameter $\vartheta$ we write $f \lesssim_\vartheta g$. We further denote with $
C^{k}_{b}(\RR; \mO) $ (for $ k \in \NN \cup \{\infty\} $ and any target set $\mO \subseteq \RR$) the space of bounded and  $ k $ times differentiable 
functions $ \varphi \colon \RR \to \mO $ with continuous and bounded derivatives. Similarly, for $ \gamma \in (0, \infty) \setminus \NN $ we define $ C^{\gamma}_{b} $ to
be the space of bounded and $ \lfloor \gamma \rfloor$--times differentiable functions
with $ \gamma - \lfloor \gamma \rfloor- $Hölder continuous and bounded derivatives.

Finally, with $ C_{\mathrm{loc}} (\RR; \mO)$ we denote the space of continuous
(and not necessarily uniformly bounded) functions with values in $\mO$. When $\mO=\RR$ we may drop the dependence on it in the notation. The spaces $C^k_b$ and $C^\alpha_b$, for $k \in \NN, \alpha \not\in \NN$ come equipped, respectively, with the norms
\begin{align*}
    \|\varphi\|_{\infty} = \sup_{x \in \RR} |\varphi(x)|\;, \quad  \| \varphi \|_{C^k_b} = \sum_{i = 0 }^k \| \partial_x^i \varphi \|_{\infty} \;,
\end{align*}
and
\begin{align*}
     \| \varphi\|_{C^\alpha_b} =  \sum_{i=0}^{\lfloor \alpha \rfloor } \| \partial_x^i \varphi \|_{\infty} + \sup_{x \neq y} \frac{|\varphi(x) - \varphi (y)|}{|x-y|^{\alpha- \lfloor \alpha \rfloor}}\;.
\end{align*}
Moreover, for any polish
space $ E $ we indicate with $ \DD ([0, \infty) ; E) $ the space of
c\`adl\`ag paths with values in $E$ endowed with the Skorokhod topology
(similarly for $ [0, \infty) $ replaced by some finite interval $ [0, T] $).

\section{Setting and main results}\label{sec:modelmainresults}

\subsection{The model}\label{sec:model}

The main object of
interest in this work is the wave speed $ \mf{s} $ of the solution to
\eqref{eqn:fkpp}. It will be convenient to consider the following class of initial
conditions, for any $ \alpha \in (0, 1) $ 
{
\begin{align*}
C^{\alpha}_{0, 1} = \big\{  u \in C^{\alpha}_b(\RR; [0, 1]) & \text{ such that } \overline{\{ x  \
\colon \ u(x) \not\in \{ 0, 1 \} \}} \text{ is compact} \\
& \text{ and such that } \lim_{x \to - \infty } u(x) =1, \quad \lim_{x \to \infty} u(x) = 0  \big\}\;,
\end{align*}}
for which the wave speed is naturally defined below. 
\begin{definition}\label{def:wave-speed}
  We say that $ \mf{s} \in \RR $ is the wave speed associated to
  \eqref{eqn:fkpp} if for any $ \alpha \in (0,1) $ and all $ u_{0} \in
C^{\alpha}_{0, 1} $ the following hold.
  \begin{enumerate}
    \item For every $ \lambda > \mf{s} $ and any $ x \in \RR $, we have $ \lim_{t \to \infty}
      u_{t}(x + \lambda t) = 0$ in probability.
    \item For every $ \lambda < \mf{s} $ and any $ x \in \RR $, we have $ \lim_{t \to \infty}
      u_{t}(x +  \lambda t ) =1 $ in probability.
  \end{enumerate}
\end{definition}
\begin{remark}\label{rem:heaviside}
  Our initial conditions are chosen to be H\"older continuous, to simplify the
  statements that will follow. We could consider also discontinuous initial
  data, e.g.\ {$ u_{0}(x) = \mathds 1_{(-\infty, 0]}(x) $,} at the cost of introducing
  blow-ups at time \( t=0\) in the solution theory for \eqref{eqn:fkpp}.
\end{remark}
The study of the wave speed of the solution to \eqref{eqn:fkpp} passes through the analysis of
its dual process, which consists of a system of Brownian
motions with coordinated branching that run backwards in time and roughly represent the genealogy of types of a sample of particles. In the dual process the parameter $ y $ interpolates between no coordination ($
y=0 $, so all particles act independently) and full coordination ($y=1
$, so all particles reproduce at once). In this backwards (or dual) picture the
measure $ \mf{R} $ captures the reproduction rate.

\begin{notation} To describe the state space of our particle systems we
introduce the set $$ \mP = \bigsqcup_{n \in \NN} \RR^{n}\;.$$ 
Then to every point $ \mathbf{x} $ of $ \mP $ we can associate a length $ \ell
(\mathbf{x}) = n \iff \mathbf{x} \in \RR^{n}. $ In particular, $ \mP $ is a Polish space with the
distance $ d (\mathbf{x}, \mathbf{y}) = | \ell(\mathbf{x}) -
\ell(\mathbf{y}) |+ \| \mathbf{x}- \mathbf{y} \| \mathds 1_{\{ \ell(\mathbf{x}) =
\ell(\mathbf{y}) \}} $, where $ \| \cdot \| $ indicates the Euclidean norm. To concisely express
our duality formulas, let us introduce the following notation
\begin{align*}
\varphi^{\mathbf{x}} = \prod_{i = 1}^{n} \varphi (x_{i}), \qquad \forall \varphi \in
C_{\mathrm{loc}}(\RR), \quad \mathbf{x}= (x_{1}, \dots , x_{n}) \in \mP.
\end{align*}
In addition, for $ \mathbf{x}, \mathbf{y} \in \mP $ we write the concatenation
\begin{align*}
\mathbf{x} \sqcup \mathbf{y} = (x_{1}, \dots, x_{\ell(\mathbf{x})},
y_{1}, \dots, y_{\ell(\mathbf{y})}) \in \mP.
\end{align*}
\end{notation}

The way in which we use duality requires the introduction of an additional
parameter $ \delta \in (0, 1] $. We will then consider a dual
\emph{conditional} on jumps with impact $ y >\delta $. 
We start by distinguishing small from large jumps with the following notation.

\begin{definition}\label{def:R-delta}
For any $ \delta \in (0, 1] $ and $ \mf{R} \in \mM ([0, 1]) $ define
\begin{align*}
\mf{R}_{\delta}^{-}(A) = \mf{R}(A \cap [0, \delta]) \;, \qquad \mf{R}_{\delta}^{+} (A) = \mf{R} (A \cap (\delta, 1]) \;, \qquad \mf{R}_\delta = (\mf{R}_\delta^- , \mf{R}_\delta^+)\;.
\end{align*}
In general, we call \textbf{\emph{compatible}} with $ \delta $ any ordered pair of measures $
\mu_{\delta} = (\mu_{\delta}^{-}, \mu_{\delta}^{+}) \in
\mM^{2} $ with support in $ [0, \delta] $ and $ [\delta, 1] $ respectively with
$ \mu_{\delta}^{+}(\{ \delta \}) =0$.
Finally, for any compatible measures $ \mu_{\delta} $ we introduce the Poisson
point process with intensity $ \ud t \otimes \frac{1}{y} \mu_{\delta}^{+} (\ud y)$:
\begin{align}\label{eqn:def-skeleton}
  \mS_{\delta} = \{ (t_{j}, y_{j}) \}_{j \in \NN}
  \subseteq  [0, \infty) \times (\delta,1] \;,
\end{align}
which is characterised by the fact that $ 0 < t_{1} < \dots < t_{j} < t_{j+1}$,
and $ t_{j} \uparrow \infty $, and is linked to the Poisson random measure
\begin{align*}
  \mR_{\delta}^{+} (\ud t, \ud y) = \sum_{j \in \NN} \delta_{t_{j}}(\ud t)
  \delta_{y_{j}} ( \ud y) \;.
\end{align*}
\end{definition}
We observe that formally we can rewrite the noise $ \mR $  in \eqref{eqn:fkpp}
as 
\begin{align*}
\mR = \mR_{\delta}^{-} + \mR_{\delta}^{+} \; ,
\end{align*}
with $ \mR_{\delta}^{-}$ a Poisson random measure with intensity $ \ud t \otimes \frac{1}{y} 
\mf{R}_{\delta}^{-}(\d y) \color{black}$. To be precise, $ \mR $ is in general not a 
measure, but can only interpreted when integrated against functions that vanish
near $ y=0 $ sufficiently fast. More precisely, $\mathcal R_\delta^-$ is
associated with a Poisson point process $\{ (s_j,z_j) \colon i \in I \}$
of intensity $\d t \otimes \frac{1}{y} \mf R(\d y)$ on $[0,\infty) \times
(0,\delta]$, with countable index set $I \subseteq \NN$ (so both the points
$(s_j,z_j)$ and the index set $ I $ are random). Then for measurable functions $f \colon [0,\infty) \times (0,\delta] \to \R$ satisfying
\[ \int_{[0,\infty) \times (0,\delta]} \min \{ |f(t,y)|, 1 \} \frac{1}{y} \mf
R(\d y) \ud t < \infty \;, \numberthis\label{eqn:integrabilitycondition} \]
the integral
\[ \int_{[0,\infty) \times (0,\delta]} f(t,y) \mathcal R(\d t, \d y):=\sum_{j\in I} f(s_j,z_j) \numberthis\label{eqn:Poissonintegral} \]
is almost surely finite, as discussed in the context of Campbell's theorem in \cite[Section 3]{K93}.
We can now introduce the dual process to \eqref{eqn:fkpp} \emph{conditional} on the
realisation of $ \mS_{\delta}$. We highlight that the FKPP equation and its conditional dual share the
same jump times $ \mS_{\delta} $ (to be precise, the dual process
\emph{may} jump at a time contained in $ \mS_{\delta} $ but does not
necessarily do so), whereas they do not share the jump times associated to
smaller impacts.
\begin{definition}[$\mu_\delta$--CBBM]\label{def:rs-cbbm}
For any $ \delta \in (0,1] $ and any couple $
\mu_{\delta} =(\mu_{\delta}^{-}, \mu_{\delta}^{+}) $ compatible with $ \delta
$, let $ \mS_{\delta} $ be the
Poisson point process defined by \eqref{eqn:def-skeleton}. We say
that 
 $ (\mathbf{C}_{t})_{t \geqslant 0} $ is a $
\mu_{\delta}$--coordinated branching
Brownian motion ($ \mu_{\delta}$--CBBM) with initial condition $
\mathbf{C}_{0} = \mathbf{x} \in \mP $ if, conditional on the realisation 
of $ \mS_{\delta} $, the process $ \mathbf{C}_{t} $ is a $ \mP$-valued Markov
process with the following dynamics:
\begin{enumerate}
    \item \textbf{Diffusion.} Let $ \mathbf{C}_{t} = (x_{1}, \dots,
x_{n}) $ at time $ t > 0 $. Then each individual $ x_{i} $ moves in $\R$ according to a Brownian
motion, independent of all other individuals, until one of the following two
jumps occur. 
    \item \textbf{Large reproduction events.}  For every $ j \in \NN $, assume that at time $ t_{j} $ (of the
Poisson point process $ \mS_{\delta} $) there are currently $n$ individuals $
\mathbf{C}_{t_{j} } =
      (x_1,\ldots,x_n) \in \RR^{n}$. \\
Then we observe one of the following
transitions, for any subset $\mathcal I \subseteq [n]$:
      $$ (x_i)_{i \in [n]} \to (x_i)_{i \in [n]} \sqcup (x_{i})_{i \in \mI} \in
      \RR^{n + | \mI |} \quad \text{ with probability } \quad
      y^{|\mathcal I|}_{j} (1-y_{j})^{n-|\mathcal I|}.$$
\item \textbf{Small reproduction events.}
Assume that at time $ t \geqslant 0 $ there are currently $n$ individuals $
\mathbf{C}_{t} = (x_1,\ldots,x_n) \in \RR^{n}$. Then for any subset $\emptyset
\neq \mathcal I \subseteq [n]$ we have the following coordinated transition:
      $$ (x_i)_{i \in [n]} \to (x_i)_{i \in [n]} \sqcup (x_{i})_{i \in \mI} \in
      \RR^{n + | \mI |} \quad \text{ at rate } \quad
      \int_{[0, \delta]} y^{|\mathcal I|} (1-y)^{n-|\mathcal I|} \frac{1}{y}
      \mu_{\delta}^{-} (\d y) \;.$$  
\end{enumerate}
\end{definition}
We observe that for $ \delta =1 $ we have $ \mS_{\delta} = \emptyset $. Then the
dynamics of the CBBM do not have a discrete reproduction component, and in
this case the process is the \emph{unconditional} dual of \eqref{eqn:fkpp}. The
duality between the CBBM and the FKPP equation will be established in Proposition~\ref{prop:cond-dual}.

The necessity of dealing with $ \delta \in (0,1) $ (and in particular, we will
eventually consider the limit $ \delta \to 0 $) is forced upon us to capture
the exact wave speed of \eqref{eqn:fkpp}. In fact the martingale problem for the
$ \mf R_1 $--CBBM (or alternatively, \cite[Lemma 3]{GKT}) implies the
following (in fact $ e^{- \mathbf{r} t} I_{t} $ is a martingale, although it is
in general \emph{not} uniformly integrable).

\begin{proposition}[Invariance of expectation]\label{prop-invexp}
Let $\mf{R} \in \mM$ be any measure and $\mathbf C=(\mathbf C_{t} )_{t \geq 0}$ be an $ \mf{R}_{1} $--CBBM and write
$I_t = \ell (\mathbf{C}_{t}) $ for the total number of particles at
time $ t \geqslant 0 $. Then 
\begin{align}\label{eqn:grwth-expct}
 \E[I_t]= I_0\mathrm e^{\mathbf{r} t}\;, \quad \text{ with } \quad \mathbf{r} =
\mf{R}([0, 1]) \;.
\end{align}
\end{proposition}
In the present case, in which the nonlinearity
in \eqref{eqn:fkpp} is concave, the wave speed is determined by the growth of
the linearisation near $ u=0 $ of the equation: this is referred to as the
\emph{pulled} regime \cite{Kueh}.
Moreover, the growth of the linearisation is roughly
equivalent to that of the dual process.
On the other hand, Jensen's
inequality guarantees that the speed of the expected value of the solution to \eqref{eqn:fkpp} is strictly slower than in the
classical case (with same total mass for the reproduction), since 
\begin{align*}
  \partial_{t} \EE [u_{t}] \leqslant \frac{1}{2} \Delta \EE [u_{t}] + \mathbf{r}
  \EE[u_{t}](1 - \EE[u_{t}]) \;,
\end{align*}
where we used that $ M_{t}^{f} = \int_{0}^{t} \int_{0}^{1} y f_{s} \mR ( \ud s, \ud y ) -
\int_{0}^{t}  \mathfrak R((0,1])  f_{s} \ud s $ is a martingale for bounded, adapted $
f $. As an educated guess, one can think that the speed of the expected value of the solution is the same as the wave speed of the solution itself, and Theorem~\ref{thm:wave-speed} below shows that this is indeed true. \color{black}
Hence we are faced with an apparent conundrum, as the speed predicted by
\eqref{eqn:grwth-expct} is exactly the deterministic (annealed) one, which we
now know to be incorrect. 

The issue is that the coordinated process $ \mathbf{C}_{t} $, unlike the branching
Brownian motion, is strongly catalytic (apart from the case $\mf{R} =c \delta_0$, $c>0$, in which the two processes coincide)\color{black}: Namely its almost sure growth rate is strictly
smaller than its annealed growth rate, captured by \eqref{eqn:grwth-expct}. For
this reason, classical martingale arguments do not work directly. 

Our approach is therefore to use
the conditioning as a way to obtain the almost sure growth rate. As usual for
Poisson point processes, one has to take particular care of the small jumps:
for this reason we consider a fixed parameter $ \delta > 0 $.
Small jumps are then dealt with via the
argument we just explained, through Jensen's inequality and martingales. This delivers a wrong estimate, but now with an
error of order $ \mO(\delta) $, in such a way that as $ \delta \to 0 $ we obtain the correct
speed.

\subsection{Main results}\label{sec:results}
Now we are ready to present our main results. We start by proving
well-posedness of \eqref{eqn:fkpp}.

\begin{theorem}\label{thm:existence-uniqueness}
Fix any $ \mf{R} \in \mM $ and let $ (\Omega, \mF, \PP) $ be a probability space supporting a Poisson point
process $ \mS $ on $ [0, \infty)\times  (0, 1] $ with intensity measure $ \ud t \otimes \color{black}
\frac{1}{y} \mf{R}(\ud y)  $. Let $ \mF_{t}$ be the
right-continuous filtration generated by $ \mS^{t} = \mS \cap \left( [0,
t] \times (0, 1] \right)$. For any $\alpha \in (0, 1) $ and any
initial condition $ u_{0}$ in $ C^{\alpha}_{b} (\RR; [0, 1]) $ there exists a
unique (up to modifications on a nullset) adapted process $$ u \colon \Omega \to
\DD( [0, \infty) ;  C^{\alpha }_{b}([0, 1]; \RR) ) $$ that solves \eqref{eqn:fkpp} on
$ [0, \infty) \times \RR $ (with the derivatives interpreted in the sense of
distributions and the integral against $ \mR $ interpreted 
 in the sense of sums over Poisson point processes, cf.~\eqref{eqn:Poissonintegral} for $\delta=1$) with $ u (0, \cdot) = u_{0}(\cdot) $.
\end{theorem}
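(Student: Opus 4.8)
The plan is to prove existence and uniqueness of strong solutions to \eqref{eqn:fkpp} by a pathwise construction between consecutive jump times, solving the deterministic semilinear parabolic equation on each interval and then applying the jump map at each atom of $\mS$. First I would enumerate the points of the Poisson point process restricted to large impacts: for a fixed threshold $\delta \in (0,1]$ (which we will later let go to $0$, but for the existence statement we may simply take $\delta$ small enough — or handle all jumps of impact bounded below), the atoms of $\mS$ with $y > \delta$ form a locally finite set $0 < t_{1} < t_{2} < \cdots$ with $t_i \uparrow \infty$ almost surely. On each slab $[t_{j}, t_{j+1})$ the equation reduces to the classical Fisher--KPP PDE
\[
\partial_{t} v = \tfrac{1}{2}\Delta v + \mf{r}\, v(1-v) + \int_{(0,\delta]} y\, v(1-v)\, \mR^{-}_{\delta}(\ud s, \ud y),
\]
i.e.\ a deterministic semilinear heat equation with an extra bounded, time-dependent zeroth-order nonlinearity coming from small jumps. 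For this one invokes standard parabolic theory: mild/semigroup formulation via the heat kernel, local existence by Banach fixed point in $C([t_j, t_j+\tau]; C^{\alpha}(\RR;[0,1]))$, a priori $[0,1]$-bounds by the comparison principle (the nonlinearity $u(1-u)$ and $y\,u(1-u)$ vanish at $0$ and $1$ and are Lipschitz on $[0,1]$), hence global existence on $[t_j, t_{j+1})$ and persistence of the $C^{\alpha}$ modulus and the $[0,1]$ range. At each $t_{j}$ one applies the map $u \mapsto u + y_{j} u(1-u)$, which preserves $[0,1]$ (since $0 \le u + y u(1-u) \le 1$ for $u \in [0,1]$, $y\in[0,1]$) and preserves $C^{\alpha}$ (it is a smooth function of $u$ composed with $u$), giving the new initial datum for the next slab. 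Concatenating yields a càdlàg path in $C^{\alpha}(\RR;[0,1])$; adaptedness is immediate since on $[0,t]$ the construction uses only $\mS^{t}$.

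The small-jump part requires a little care because the intensity $\frac{1}{y}\mf{R}(\ud y)$ may be infinite near $0$, so the "extra term" $\int_{(0,\delta]} y\, v(1-v)\,\mR^{-}_{\delta}$ is an integral against a genuinely infinite-activity Poisson point process. Here I would either (a) keep this term as a genuine jump term as well — observing that the series $\sum_{i : z_i \le \delta} z_i\, u_{s_i-}(1-u_{s_i-})$ is a.s.\ absolutely convergent on compact time intervals because $\sum_i z_i \mathds 1_{s_i \le T} < \infty$ a.s.\ (as $\int_{(0,\delta]} y \cdot \frac{1}{y}\mf{R}(\ud y) = \mf{R}((0,\delta]) < \infty$, so the point process has finite first moment in the $y$-variable, cf.\ Campbell's theorem and \eqref{eqn:integrabilitycondition}), so in fact all jumps of the equation are of the same kind and summable; or (b) approximate by truncating at impact $\eps$, solving as above with finitely many jumps per bounded interval, and passing to the limit $\eps \downarrow 0$ using the $L^{\infty}$-contraction of the solution map in the initial datum together with the summability just noted to control the cumulative effect of jumps in $(\eps,\delta]$. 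Approach (a) is cleaner: the number of jumps $t$ with impact $> \eps$ in $[0,T]$ is finite for every $\eps>0$, and the total impact $\sum y_i$ over $[0,T]$ is finite, so the natural construction is: between successive \emph{all} jump times (countably many, accumulating only at $\infty$ when we organize them by decreasing impact — but note they can accumulate in time, so one actually orders them so that on $[0,T]$ only finitely many exceed any given $\eps$), solve the purely continuous FKPP PDE $\partial_t v = \frac12 \Delta v + \mf{r} v(1-v)$, and apply $u \mapsto u + y u(1-u)$ at each jump. Convergence of the infinite product/composition of jump maps is controlled because $\sum_i y_i < \infty$ on $[0,T]$ and each map is a contraction-type perturbation of the identity in $C^{\alpha}$.

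Uniqueness follows from pathwise uniqueness on each deterministic slab (uniqueness for the semilinear heat equation with Lipschitz nonlinearity on the invariant set $[0,1]$, via Gronwall in $C^\alpha$ or $L^\infty$) combined with the fact that the jump map is deterministic and a bijection on $C^{\alpha}(\RR;[0,1])$-valued data (well, injective on the relevant range, and in any case the value $u_{t_j}$ is determined by $u_{t_j-}$), so the whole trajectory is determined by $u_0$ and $\mS$; two solutions agreeing on $[0, t_j]$ must agree on $[0, t_{j+1}]$. The main obstacle I anticipate is the infinite-activity small-jump term: making precise in what sense $\int_{(0,1]} y u_{t-}(1-u_{t-}) \mR(\ud t, \ud y)$ is to be understood (it is the sum \eqref{eqn:Poissonintegral}, which converges a.s.\ by the first-moment bound $\mf{R}((0,1]) < \infty$, not by the $\min\{|f|,1\}$ criterion applied naively), and verifying that the concatenated construction genuinely solves the distributional/mild formulation including this term — i.e.\ that one may interchange the sum over jumps with the heat-semigroup action and the spatial distributional derivatives. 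The rest — local well-posedness of the deterministic FKPP PDE, the comparison principle giving the $[0,1]$ bound and global existence, and propagation of $C^\alpha$ regularity — is standard parabolic theory and I would cite it rather than redo it.
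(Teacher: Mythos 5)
Your option (b) is essentially the paper's own proof: the paper truncates the noise at impact level $\ve$, solves the resulting locally-finite-jump equation by concatenating heat-semigroup evolution with the jump maps $u \mapsto u + y\,u(1-u)$, shows the family $\{u^{\ve}\}$ is Cauchy as $\ve \downarrow 0$ using precisely the summability $\int_{(0,1]} y \cdot \frac{1}{y}\mf{R}(\ud y) = \mf{R}((0,1]) < \infty$ you identify, together with a Gronwall-type amplification factor $\exp\{\int \log(1+\overline{y})\,\mR(\ud r, \ud \overline{y})\}$ accounting for later jumps, obtains uniqueness from the same stability estimate, and propagates $C^{\alpha}$ regularity via Schauder estimates as you suggest. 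Your preferred option (a) cannot be run literally — when $\frac{1}{y}\mf{R}(\ud y)$ has infinite mass near $0$ the jump times are almost surely dense in every time interval, so there is no ``next jump time'' and one is forced back to the truncation limit of (b) — but you acknowledge this, and it does not affect the correctness of the overall argument.
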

This result is a consequence of Proposition~\ref{prop:existence}. For the
solution we just constructed we can describe the wave speed as follows.

\begin{theorem}\label{thm:wave-speed}
  For every $ \mf{R} \in \mM,\, \alpha \in (0, 1) $ and any initial condition
$ u_{0} \in C^{\alpha}_{0,1} \color{black}  $, the solution $ u $ to the FKPP equation
\eqref{eqn:fkpp} with initial condition $ u_{0} $ (as in
Theorem~\ref{thm:existence-uniqueness}) has wave speed $ \mf{s} > 0 $ in the sense
  of Definition~\ref{def:wave-speed} given by
\begin{equation}\label{eqn:def-speed}
  \frac{\mf{s}^{2}}{2}  = \int_{[0, 1]} \log{(1 + y)} \frac{1}{y}
  \mf{R}(\ud y) \;. 
\end{equation}
\end{theorem}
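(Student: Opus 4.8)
The proof will proceed by establishing matching upper and lower bounds on the wave speed via the quenched duality with the $\mu_\delta$--CBBM, and then sending $\delta \downarrow 0$. Fix $\alpha \in (0,1)$ and $u_0 \in C^\alpha_{0,1}$; by the duality of Proposition~\ref{prop:cond-dual} we can express $\E[u_t(z)]$ (conditionally on the skeleton $\mS_\delta$, and then averaging) in terms of a $\mu_\delta$--CBBM started from a single particle at $z$, reading off $u_t$ through the product functional $\varphi^{\mathbf C_t}$ with $\varphi$ close to $\mathds 1_{\{u_0 = 1\}}$. Since the nonlinearity is concave (the pulled regime), the position of the front is governed by the rightmost particle of the dual: heuristically, $\{u_t(x+\lambda t)\to 0\}$ corresponds to the rightmost CBBM particle travelling slower than $\lambda$, and $\{u_t(x+\lambda t)\to 1\}$ to it travelling faster. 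The core analytic task is therefore to show that the speed of the rightmost particle of the $\mf R_1$--CBBM equals $\mf s = \sqrt{2\int_{[0,1]} \log(1+y)\,\tfrac1y\,\mf R(\d y)}$.

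For the \textbf{upper bound} (statement (1) of Definition~\ref{def:wave-speed}), I would split $\mf R = \mf R_\delta^- + \mf R_\delta^+$ and condition on the skeleton $\mS_\delta$ of large jumps. For the small-jump part $\mf R_\delta^-$, the CBBM is no longer strongly catalytic in a way that matters: a Girsanov/martingale argument à la Engländer--Kyprianou applies and contributes at most $\mf R((0,\delta])$ to the linearised growth rate, i.e.\ a speed contribution $\sqrt{2\mf R((0,\delta])} = \mO(\sqrt\delta)$. For the large jumps, I would use a time-change making the jump rate explicit, together with elementary large-deviation (Cramér) estimates for the displacement of Brownian motion between consecutive large jumps, to show that along the skeleton the logarithmic growth of the total mass weighted by position $e^{\theta X}$ is governed by $\sum_{i: t_i \le t} \log(1+y_i)$, whose law of large numbers gives $t\int_{(\delta,1]}\log(1+y)\tfrac1y\mf R(\d y)$. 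Optimising the Legendre-type exponent $\theta$ against $\tfrac{\theta^2}{2}t + t\int\log(1+y)\tfrac1y\mf R(\d y)$ yields the bound $\mf s_\delta^+ \le \sqrt{2\int_{(\delta,1]}\log(1+y)\tfrac1y\mf R(\d y)} + \mO(\sqrt\delta)$, and combining via a first-moment (Markov/union) bound over particles shows the rightmost particle does not exceed $\mf s_\delta := \mf s_\delta^+ + \mO(\sqrt\delta)$ asymptotically; letting $\delta\downarrow 0$ gives $\limsup \le \mf s$.

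For the \textbf{lower bound} (statement (2)), I would use monotonicity of \eqref{eqn:fkpp} in $\mf R$ (comparison: more reproduction only speeds up the front) to replace $\mf R$ by $\mf R_\delta^+$, discarding the mass on $(0,\delta]$; this only decreases the speed, so it suffices to bound the speed of the $\mf R_\delta^+$--CBBM from below by $\sqrt{2\int_{(\delta,1]}\log(1+y)\tfrac1y\mf R(\d y)}$ and then send $\delta \downarrow 0$. Here one needs a second-moment / many-to-two argument along the skeleton: conditionally on $\mS_\delta$ the number of particles in a moving window $[\,(\lambda-\eps)t, (\lambda+\eps)t\,]$ has first moment growing like $\exp(t\int_{(\delta,1]}\log(1+y)\tfrac1y\mf R(\d y) - \tfrac{\lambda^2}{2}t)$ by the same time-change/LDP computation, and a variance estimate (using that coordinated births of distinct $k$-tuples are still governed by the finite intensity $\mf R((\delta,1])$ so the genealogy is controlled) shows this count is positive with probability bounded away from $0$ for $\lambda < \mf s_\delta$; a standard restart/branching argument upgrades this to probability tending to $1$, and duality translates it into $u_t(x+\lambda t)\to 1$.

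\textbf{Main obstacle.} The delicate point is the quenched large-deviation/time-change analysis of the CBBM \emph{along a typical skeleton realisation}: one must show that the fluctuations of the random sum $\sum_{t_i \le t}\log(1+y_i)$ and of the Brownian increments between large jumps do not spoil the exponential rate, uniformly in the spatial scaling, and — for the lower bound — control the second moment despite the coordination (many particles born at the same space-time point are strongly correlated). Handling the $\delta\downarrow 0$ limit so that the $\mO(\sqrt\delta)$ small-jump correction is genuinely uniform, and making the concavity/``pulled'' reduction from the PDE to the rightmost-particle statement precise (including the subtlety that quenched and annealed Lyapunov exponents differ, so annealed martingales are not uniformly integrable), is where the real work lies.
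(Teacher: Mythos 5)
Your overall strategy coincides with the paper's: condition on the skeleton $\mS_\delta$ of large jumps, use the quenched duality with the $\mf{R}_\delta$--CBBM, prove the upper bound by a many-to-one (first moment) bound combined with an almost sure bound $\mf{c}_\delta$ on the quenched growth of the total mass (a martingale/Fatou argument for the small jumps, and a law of large numbers for $\sum_{t_j\le t}\log(1+y_j)$ along the skeleton for the large ones), prove the lower bound by comparison plus a second-moment/survival argument in a moving window followed by a restart, and let $\delta\downarrow 0$.

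There is, however, one concrete error in your lower bound. You propose to compare with the $\mf{R}_\delta^+$--CBBM, i.e.\ to discard all of $\mf{R}$ restricted to $[0,\delta]$, and to show the speed is at least $\sqrt{2\int_{(\delta,1]}\log(1+y)\frac{1}{y}\mf{R}(\ud y)}$. As $\delta\downarrow 0$ this only yields $\frac{\lambda^{2}}{2}\le\int_{(0,1]}\log(1+y)\frac{1}{y}\mf{R}(\ud y)=\frac{\mf{s}^{2}}{2}-\mf{r}$, which misses the contribution $\mf{r}=\mf{R}(\{0\})$ of the continuous selection term and so does not match the upper bound whenever $\mf{r}>0$; in the classical case $\mf{R}=\mf{r}\delta_{0}$ it is vacuous. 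The paper's comparison keeps the atom at zero: it replaces $\mf{R}_\delta^-$ by $\mf{r}\delta_{0}$, so the comparison process is a $(\mf{r}\delta_{0},\mf{R}_\delta^{+})$--CBBM whose exponent is $\underline{\mf{c}}_\delta=\mf{r}+\int_{(\delta,1]}\log(1+y)\frac{1}{y}\mf{R}(\ud y)\to\frac{\mf{s}^{2}}{2}$. This retention is compatible with the rest of your plan: conditional on $\mS_\delta$, the particles of a $(\mf{r}\delta_{0},\mf{R}_\delta^{+})$--CBBM still evolve independently (they only share the jump times), which is exactly what your restart step requires — with a general $\mf{R}_\delta^-$ it would fail. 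Two smaller imprecisions: what is small as $\delta\downarrow 0$ is not the small-jump exponent $\mf{R}([0,\delta])$ itself (it tends to $\mf{r}$) but its excess over $\int_{[0,\delta]}\log(1+y)\frac{1}{y}\mf{R}(\ud y)$, which is $\mO(\delta\,\mf{R}([0,\delta]))$; and the large-deviation input is a Cram\'er bound for the Bernoulli$(y_j)$ participation of particles in each large reproduction event (giving $I_{t_j}/I_{t_j-}\approx 1+y_j$), not for the Brownian displacements, which enter only through the Gaussian tail in the many-to-one estimate.
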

Again, we follow the convention $\int_{\{0\}} \log(1+y) \frac{1}{y} \mf R(\ud
y) = \mf{R}(\{ 0 \}) = \mf{r}$.


\begin{proof}
We follow two different arguments for the lower and upper bounds to the
wave speed (the two conditions in Definition~\ref{def:wave-speed}).

\textit{Step 1.} Let us start with the upper bound, so fix any $ \lambda > \mf{s} $. Our aim will be
to prove that for any $ x \in \RR $
\begin{align*}
\lim_{t \to \infty} \EE u_{t}(x  + \color{black} \lambda t) = 0\;,
\end{align*}
which implies the required convergence in probability. For this purpose
consider $ \delta \in (0, 1) $ and define $ \mf{R}_{\delta}=  (\mf{R}_{\delta}^{-},
\mf{R}_{\delta}^{+}) $ as in Definition~\ref{def:R-delta}, associated to the
decomposition $ \mR = \mR_{\delta}^{-} +
\mR_{\delta}^{+} $, where $
\mR_{\delta}^{+} $ is the random measure associated to the Poisson point process $ \mS_{\delta}
$ with intensity $  \ud t \otimes  \frac{1      }{y}\mf{R}^{+}_{\delta}(\d y) \color{black}$. Then let $ \EE^{\delta} $ indicate the expectation conditional on $
\mS_{\delta} $, namely $$ \EE^{\delta}[f] = \EE[f \vert \mS_{\delta}]\;. $$ Since
$ u_{t}(x) $ takes values in $ [0, 1] $ by dominated convergence it thus
suffices to prove that {if $\delta = \delta (\lambda) \in (0,1)$ is sufficiently small, then} $ \PP $--almost surely
\begin{align*}
\lim_{t \to \infty} \EE^{\delta}u_{t}(x  + \color{black} \lambda t) = 0 \;.
\end{align*}
Here we use the conditional duality of Proposition~\ref{prop:cond-dual} to bound
\begin{align*}
\EE^{\delta} [1- u_{t}(x  + \color{black} \lambda t)] = \EE^{\delta} \left[ (1 -
u_{0})^{\mathbf{C}_{t}^{\mathbf{x}(\lambda, t)}} \right] \;,
\end{align*}
where $ \mathbf{C}_{t}^{\mathbf{x}(\lambda, t)} $ is an $
\mf{R}_{\delta}$--CBBM as
in Definition~\ref{def:rs-cbbm}, started in $  \mathbf{x}(\lambda, t) = x +
\lambda t \in
\RR^{1} $. Now since $ u_{0} \in C^{\alpha}_{0, 1} $ there exists an $ a \in
\RR $ such that $ u_{0}(x) = 0 $ for all $ x \geqslant  a $. In particular
\begin{align*}
\EE^{\delta} (1 - u_{0})^{\mathbf{C}_{t}^{\mathbf{x}(\lambda, t)}} \geqslant \EE^{\delta}(  \mathds 1_{[a, \infty)} \color{black})^{\mathbf{C}_{t}^{\mathbf{x}(\lambda, t)}} = \PP^{\delta}( \mathbf{S}_{t} \leqslant  -a +
\lambda t + x) \;,
\end{align*}
where $ \mathbf{S}_{t} =\max \mathbf{C}_{t}^{0} $ is the rightmost particle of
an $ \mf{R}_{\delta}$--CBBM $ \mathbf{C}^{0}_{t} $ started in $ \mathbf{x} = 0 \in \RR^{1} $ {(note that by symmetry $\PP^\delta(\max \mathbf{C}_t^0 \leq c) = \PP^\delta (\min \mathbf{C}_t^0 \geq -c) = \EE^{\delta}( \mathds 1_{[-c, \infty)} )^{\mathbf{C}_{t}^{0}}$)}.
Hence it suffices to show that for any $ x_{0} \in \RR $
\begin{align*}
\lim_{t \to \infty} \PP^{\delta} (\mathbf{S}_{t} > \lambda t + x_{0}) =0 \;.
\end{align*}
This claim follows from Proposition~\ref{prop:manytoone}, up to choosing
$ \delta $ sufficiently \color{black} small so that for $ \mf{c}_{\delta} $ as in
\eqref{eqn:logspeed}
\begin{align*}
\mf{s} \leqslant  \sqrt{2 \mf{c}_{\delta}} < \lambda \;.
\end{align*}

\textit{Step 2.} Let us now pass to the lower bound. That is, choose $ \lambda <
\mf{s} $ and, similarly to above, let us prove that
$\lim_{t \to \infty} \EE u_{t}(x  + \color{black} \lambda t) = 1$.
As before we can fix $\delta \in (0, 1) $, so that it suffices to prove that
$ \PP $--almost surely
\begin{align*}
\lim_{t \to \infty} \EE^{\delta} u_{t}(x  + \color{black} \lambda t) = 1 \;.
\end{align*}
Then by the duality of Proposition~\ref{prop:cond-dual} we have
\begin{align*}
\EE^{\delta} (1- u_{t})(x  + \color{black} \lambda t) = \EE^{\delta} (1 -
u_{0})^{\mathbf{C}_{t}^{\mathbf{x}(\lambda, t)}} \leqslant \EE^{\delta} (1 -
u_{0})^{ \underline{\mathbf{C}}_{t}^{\mathbf{x}(\lambda, t)}}\;.
\end{align*}
Here $ \mathbf{C}^{\mathbf{x}(\lambda, t)}_{t} $ is an $ \mf{R}_{\delta} $--CBBM,
with $ \mf{R}_{\delta} =( \mf{R}_{\delta}^{-},  \mf{R}_{\delta}^{+}) $ started in $ \mathbf{x}(\lambda, t) = x  + \color{black} \lambda t \in
\RR^{1}$, and $ \underline{\mathbf{C}}^{\mathbf{x}(\lambda, t)}_{t} $ is an $
\underline{\mf{R}}_{\delta} $--CBBM associated to  compatible \color{black} measures $ \underline{\mf{R}}_{\delta} =
(\mf{r} \delta_{0}, \mf{R}^{+}_{\delta}) $, started in $ \mathbf{x}(\lambda, t) $. Then
we can use that by definition
\begin{align*}
  \mf{r} \delta_{0} \leqslant \mf{R}_{\delta}^{-} \;,
\end{align*}
in the sense of measures, so that we can couple $ \mathbf{C}_{t} $ and $
\underline{\mathbf{C}}_{t} $ in such a way that $ \underline{\mathbf{C}}_{t} \subseteq
\mathbf{C}_{t} $, which implies the desired estimate. In particular it now
suffices to prove that 
\begin{align*}
  \lim_{t \to \infty} \EE^{\delta} (1 - u_{0})^{
  \underline{\mathbf{C}}_{t}^{\mathbf{x}(\lambda, t)}} =0 \;.
\end{align*}
Again, we find $ b \in \RR $ such that $ u_{0} (x) = 1 $ for all $ x
\leqslant b$, so that
\begin{align*}
 \EE^{\delta} (1 - u_{0})^{ \underline{\mathbf{C}}_{t}^{\mathbf{x}(\lambda, t)}}
\leqslant \PP^{\delta}( \underline{\mathbf{S}}_{t}  \leqslant- b +\color{black} \lambda t+\color{black}x) \;,
\end{align*}
where $ \underline{\mathbf{S}}_{t} = \max \underline{\mathbf{C}}_{t}^{0} $, the
latter being a $ (\mf{r} \delta_{0}, \mf{R}_{\delta}^{+}) $--CBBM started in $
\underline{\mathbf{C}}^{0}_{0} = 0 \in \RR^{1}. $ Now by
Proposition~\ref{prop:rightmost-speed} we know that if $ \delta \in (0, 1) $ is
chosen to be sufficiently small such that for $ \underline{\mf{c}}_{\delta} $ as in
\eqref{eqn:c-delta-low}
\begin{align*}
\lambda < \sqrt{2 \underline{\mf{c}}_{\delta}} \leqslant \mf{s}\;,
\end{align*}
then $ \lim_{t \to \infty}
\PP^{\delta}( \underline{\mathbf{S}}_{t}  \leqslant -b + \color{black} \lambda t+\color{black}x) =  0 \color{black}$, $ \PP $--almost
surely. The proof is concluded.
\end{proof}

\section{Existence and duality}\label{sec:existenceduality}
This section is devoted to proving existence and uniqueness of solution to
\eqref{eqn:fkpp}, as well as duality. We will first construct unique solutions
and observe that they satisfy a certain martingale problem. Then we use the
martingale problem to establish duality.

\subsection{Existence and uniqueness}

Let us start by defining the generator associated to the nonlinearity of
\eqref{eqn:fkpp}. To be precise, the first definition will be associated to the
space-independent equation. The extension to the spatial case passes through
cylinder functions, as explained in the subsequent definition of martingale
solutions. Here the set $ C_{c}^{\infty} $ indicates the space of smooth
functions with compact support on $ \RR$. Throughout these construction, we
recall that in Definition~\ref{def:R-delta} we have divided
\[ \mf{R} = \mf{R}_{\delta}^{-} + \mf{R}_{\delta}^{+}\;. \] 
\begin{definition}
Fix any $ \mf{P} \in \mM$. For any $n$-tuple of 
smooth functions $ \boldsymbol \varphi=(\varphi_1,\ldots,\varphi_n) \in (C_{c}^{\infty})^n $ and an $ F \in
C^1_b(\RR^n) $ define the cylinder function
\begin{align*}
C_{\rm{loc}} (\RR) \ni u \mapsto F_{\boldsymbol{\varphi}}(u) = F (\langle u , \varphi_1 \rangle,\ldots,\langle u , \varphi_n \rangle)\;,
\end{align*}
where $\langle u, \varphi_i \rangle = \int_{\RR} u \varphi_i \ \ud x$, \color{black}
and for any such $ F_{\boldsymbol{\varphi}} $ we define
the generator $$ \mL^{\mf{P}}
F_{\boldsymbol{\varphi}} \colon C_{\mathrm{loc}}(\RR) \to \RR\color{black} $$ as follows
\begin{align*}
\mL^{\mf P} (F_{\boldsymbol{\varphi}})(u) & = \int_{(0,1]} \left\{ F_{\boldsymbol{\varphi}}(u + y u(1-u)) - F_{\boldsymbol{\varphi} }
(u) \right\} \frac{1}{y} \mf{P} (\ud y)\;, 
\end{align*}
for all $u \in C_{\mathrm{loc}}(\RR)$.
\end{definition}
We observe that the integral is well-defined at $ y=0 $ for every $ u $, since 
\begin{align*}
F_{\boldsymbol{\varphi}} & (u + y u (1- u)) - F_{\boldsymbol{\varphi}} (u) \\
& = \sum_{i=1}^n \partial_i F (\langle u, \varphi_1 \rangle,\ldots,\langle u, \varphi_n \rangle) y \langle u (1-u), \varphi_i \rangle + o(y) \;,
\end{align*}
for $ y \to 0 $ as $ F \in C^1_b $.
\begin{remark}\label{rem:bd-gen}
In particular, we can bound for any $F \in C^1_b$ and $\boldsymbol\varphi=(\varphi_1,\ldots,\varphi_n)$:
\begin{align*}
| \mL^{\mf{P}}(F_{\boldsymbol{\varphi}}) (u) | \lesssim_{\| F \|_{C^1_b}, \sum_{i=1}^n \| \varphi_i \|_{L^1}}
\mf{P}((0, 1]) \;,
\end{align*}
where $\|\varphi_i\|_{L^1}= \int_{\RR} |\varphi_i(x)| \ud  x$.
\end{remark}
Next we give a precise definition of martingale solutions to the stochastic
FKPP equation. We use the following convention.
For any $ n \in \NN $ and $ F \in C^1_b(\RR^{n}; \RR) $ and for any smooth
functions $ \varphi_{i} \in C^{\infty}_{c}(\RR), i = 1 ,\dots,n
$ write, for $ \boldsymbol{\varphi} = (\varphi_1, \ldots, \varphi_n) $ and any $ u \in C_{\mathrm{loc}}(\R\color{black}) $:
\begin{align*}
F_{\boldsymbol{\varphi}} (u) = F\Big(  \{ \langle u, \varphi_{i}  \rangle \}_{i = 1}^{n} \Big) \;, \qquad \partial_{i} F_{\boldsymbol{\varphi}} (u) =
\partial_{i}F\Big(  \{ \langle u,\varphi_{i}  \rangle \}_{i = 1}^{n} \Big)
\;.
\end{align*}
We also recall that for $ \mS_{\delta} $ as in Definition~\ref{def:R-delta} we
let $ \PP^{\delta} $ be the (random) probability distribution
\begin{align*}
\PP^{\delta} (\mA) = \PP ( \mA \vert \mS_{\delta})\;,
\end{align*}
and we let $ \mF_{t}^{\delta} $ be the filtration generated by
$$
\mF_{t}^{\delta} = \sigma (\mS_{\delta}^{t})\;, \qquad \mS_{\delta}^{t}
\overset{\mathrm{def}}{=}  \mS^{t} \cup \mS_{\delta}= ( \mS \cap (
[0, t] \times (0, 1]) )\cup \mS_{\delta}\;.$$

\begin{definition}[Conditional martingale solution]\label{def:martingale-solutions}
Fix any $ \mf{R} \in \mM $ as well as $ \delta \in (0, 1], \alpha \in (0,
1) $ and $ u_{0} \in C^{\alpha}_b $.
Let $ u $ be a stochastic
process over a probability space $ (\Omega, \mF, \PP) $ taking values in 
$ \DD \big([0, \infty); C^{\alpha}_b(\RR; [0, 1]) \big) $.
Let $ \mS_{\delta} = \{ (t_{j}, y_{j}) \}_{j \in \NN}$ be a Poisson point process as in
Definition~\ref{def:R-delta}, defined on the same probability space.

We say that $ u $ is a martingale solution to Equation~\eqref{eqn:fkpp} on $
[0, \infty) $ with initial condition $ u_{0}$, \emph{conditional on $
\mS_{\delta} $}, if $ u (0, \cdot) =
u_{0}(\cdot) $ and the following conditions are satisfied for any $n \in \NN $, $F\in C^1_b(\RR^n;\RR)$ and $ \varphi_{i} \in C^{\infty}_{c}(\RR)$ for $i = 1 ,\dots,n
$\color{black}:
\begin{enumerate}
\item For all $j \in \NN$ the process
\begin{align*}
M^{F}_{t} : = F_{\bp}(u_t) - F_{\bp}(u_{t_j}) - \int_{t_j}^t \mL^\delta (F_{\bp}) (u_s) \ud s 
\end{align*}
is an $ \mF_{t}^{\delta} $--c\`adl\`ag centered martingale for $t$ in $ [t_{j},
t_{j+1}) $, with $\mL^\delta (F_{\bp})$ defined as:
\begin{align*}
\mL^\delta (F_{\bp}) (u) = \left( \sum_{i=1}^n\partial_{ i } F_{\bp} (u) \cdot \Big( \langle u, \frac{1}{2}
\Delta \varphi_{ i } \rangle + \langle \mf{r} u(1-u) , \varphi_{ i }  \rangle  \Big) \right) +  \mL^{\mf R_{\delta}^{-}}(  F_{\bp} ) (u) \;. \numberthis\label{eqn:drift}
\end{align*}
\item For all $j\in \NN$ the martingale $ M^{F}_{t} $ has predictable quadratic
variation, for $ t \in [t_{j}, t_{j+1})$
\begin{align*}
\langle M^{F} \rangle_{t} = 
 \int_{t_{j}}^{t}  \mL^{\delta} ( ( F_{\bp} )^{2} )(u_s) - 2  F_{\bp} (u_s)
\mL^{\delta}( F_{\bp})(u_{s}) \ud s \;. \numberthis\label{eqn:qv}
\end{align*}
\item And finally for all $j \in \NN$ we have $ u_{t_{j}} = u_{t_{j}-} + y_{j} u_{t_{j}-}(1 -
u_{t_{j}-}) $.
\end{enumerate}
\end{definition}

In this setting we find the following result.

\begin{proposition}\label{prop:existence} Fix any $ \mf{R} \in \mM $, as well as  $
 \alpha \in (0, 1) $. Let $ (\Omega, \mF, \PP) $
be a probability space supporting a Poisson point process $ \mS = \mS_{0\color{black}}$ as in
Definition~\ref{def:R-delta}. For every $ u_{0} \in C^{\alpha}_b(\RR; [0, 1]) $ there  exists a unique solution
\[ u \colon \Omega \to \DD([0, \infty) ; C^{\alpha}_b(\RR; [0, 1])) \cap  \DD((0, \infty) ; C^{\beta}_b(\RR; [0, 1])) \]
to \eqref{eqn:fkpp} in the sense of Theorem~\ref{thm:existence-uniqueness}, for arbitrary $\beta  \geqslant 0$.
Moreover such $ u $ satisfies for any $ \delta \in (0, 1] $ the conditional
martingale problem of Definition~\ref{def:martingale-solutions}.
\end{proposition}

\begin{proof}

To construct solutions our approach \color{black} is to build an approximating sequence $ \{u^{\ve} \}_{\ve \in
(0, 1)} $
associated to Poisson point processes that has only finitely many jumps. For
this reason, for $\ve \in (0,1)$ we
recall that the measure $\mathcal R_{\ve}^+$ is the restriction of the Poisson
random measure $\mathcal R$ to $ [0, \infty) \times(\ve, 1] $, as in from
Definition~\ref{def:R-delta} (we call $ \mR $ a Poisson random measure, but it
can only be integrated against functions that vanish sufficiently
quickly near $ y=0 $, cf.~\eqref{eqn:Poissonintegral} for $\delta=1$).

In this way the Poisson random measure \( \mR_{\ve}^+(\ud t, \ud y) \) has
intensity $ \frac{1}{y}\mf{R}^+_\ve(\d y) \otimes \d t$ 
with finite total mass and we recall the representation through the locally
finite Poisson point process $ \mS_{\ve} = \{ (t_{i},y_{i}) \}_{i \in \NN}  $
\begin{align*}
\mathcal R_{\ve}^+ = \sum_{i \in \NN} \delta_{(t_{i}, y_{i})}\;.
\end{align*}
Let now $ u^{\ve} \in \DD([0, \infty) ; C^{\alpha}_b) $ be the
solution to:
\begin{equation}\label{eqn:approximate-fkpp}
\begin{aligned}
 \d u_{t}^{\ve} \color{black} & =  \frac{1}{2} \Delta u_{t}^{\ve}  \d t \color{black} + \mf{r}
u_{t}^{\ve}(1-u_{t}^{\ve})  \d t \color{black} +  \int_{(\ve ,1]} y u_{t
-}^{\ve}(1-u_{t-}^{\ve}) \mathcal R_{\ve}^+( \d t, \d y \color{black}),
\end{aligned}
\end{equation}
with initial condition $u_0 $. Here solutions are defined by the following
constraints for any $ j \in \NN $
\begin{align*}
u_{t}^{\ve} & = P_{t - t_{j}} u_{t_{j}}^{\ve}+ \int_{t_j}^t \mf{r} P_{t-s} [u^\ve_s(1-u^\ve_s) ] \ud s \;, \qquad \forall t \in [t_{j}, t_{j+1}) \;,\\
 u_{t_{j}}^\eps \color{black}& =  u_{t_{j}-}^\eps \color{black}+ y_{j}  u_{t_{j}-}^\eps \color{black}(1 -
 u_{t_{j}-}^\eps \color{black}) \;,
\end{align*}
where $ P_{t} $ is the heat semigroup $$ P_{t} \varphi(x) = \int_{\RR}
\frac{1}{\sqrt{2 \pi t}} \varphi(y)\mathrm e^{- \frac{| x-y |^{2}}{2 t}}\ud y\;.$$

\textit{Step 1: Existence and uniqueness.} We
start by proving that the sequence $
\{u^{\ve} \}_{\ve \in (0, 1)} $ is Cauchy in $ \DD([0, T]; C_{b}(\RR; [0, 1])) $
for $ \ve \to 0 $ and any $ T > 0 $. Fix any two $
0 < \overline{\ve} < \ve < 1 $ and let $ v^{\overline \ve,\ve}  $ be the difference $
v^{\overline \ve,\ve}  =  u^{ \overline{\ve}}- u^{\ve}$. Now we observe that $v^{\overline \ve,\ve}$ is positive: $v^{\overline \ve,\ve}_t(x) \geqslant 0$ for all $t \geqslant 0, x \in \RR$. This follows because the solution to the $\ve$-discretised FKPP equation is order preserving, meaning that if $g_0^1 \geqslant g_0^2$, then the solution $g_t^i$ for $i\in \{ 1,2\}$ to 
$\ud g^i = \Delta g^i \ud t + \mf{r} g^i (1- g^i) \ud t + \int_{(0, 1]} y g^i(1- g^i) \mR_\ve (\ud t , \ud y) $ satisfies $g^1_t \geqslant g_t^2$. Then observe that $u^{\overline{\ve}}$ solves the same equation as $u^\ve$ apart from the jump times $t_j$ such that $y_j \in (\overline{\ve}, \ve]$. In particular, if one assumes that at such times $t_j$ one has $v^{\overline\ve,\ve}_{t_j-} \geqslant 0$, then it follows that $v^{\overline \ve,\ve}_{t_j} \geqslant 0$, because $u^{\overline{\ve}}$ is increasing at time $t_j$: we can then conclude that $v^{\overline \ve,\ve}  \geqslant 0$ until the next time $t_{j^\prime}$ such that $y_{j^\prime} \in (\overline{\ve}, \ve]$. By induction, since $v^{\overline{\ve}, \ve}_0 = 0$ we obtain as desired that $v^{\overline{\ve},\ve}_t\geqslant 0$ for all $t \geqslant 0$.
In addition, $ v^{\overline{\ve}, \ve} $ solves
\begin{align*}
\ud v^{\overline \ve,\ve}  =& \frac{1}{2} \Delta v^{\overline \ve,\ve} \ud t + \mf{r} v^{\overline \ve,\ve}(1-u^{\ve}-u^{\overline \ve}) \d t  + \int_{(0, 1]} y v^{\overline \ve,\ve}(1 - u^{\ve} - u^{ \overline{\ve}})
\mR_{\ve}^{+}(\ud t, \ud y) \\
& + \int_{(0,1]} y  u^{ \overline{\ve}}(1 - u^{ \overline{\ve}})
\mathds 1_{( \overline{\ve}, \ve]}(y) \mR_{ \overline{\ve}}^{+}(\ud t, \ud y) \\
\leqslant & \left( \frac{1}{2} \Delta v^{\overline \ve,\ve} + \mf{r} v^{\overline \ve,\ve}  \right)\ud t + \int_{(0, 1]}y v^{\overline \ve,\ve} 
\mR_{\ve}^{+}(\ud t, \ud y) + \int_{(0,1]} y  
\mathds 1_{( \overline{\ve}, \ve]}(y) \mR_{ \overline{\ve}}^{+} ( \ud t, \ud y)\;,
\end{align*}
where we used that the solution takes values in $ [0, 1] $. Using that $
v^{\overline \ve,\ve}  (0, \cdot) = 0$, we find via a maximum principle the upper bound:
\begin{align*}
\| v_{t}^{\overline \ve,\ve}  \|_{\infty} & \leqslant \int_{0}^{t} \int_{(0,1]} y \mathds 1_{( \overline{\ve} , \ve]}(y)
\exp\left\{ \mf{r}(t-s) + \int_{s}^{t} \int_{(0,1]} \log{(1 + \overline{y})}  \mR_{\ve}^{+} (
\ud r, \ud \overline{y}) \right\}
\mR_{ \overline{\ve}}^{+}( \ud s, \ud y \color{black}) \\
& \leqslant \int_{0}^{t} \int_{(0,1]} y \mathds 1_{(0 , \ve]}(y)
\exp\left\{ \mf{r} (t-s)+\int_{s}^{t} \int_{(0,1]} \log{(1 + \overline{y})}  \mR( \ud r, \ud
\overline{y}) \right\} \mR( \ud s, \ud y \color{black})\;,
\end{align*}
where the integrals are defined in the sense of Campbell's theorem (see~\eqref{eqn:Poissonintegral}, or \cite[Section 3]{K93} for further details). Now, the
right-hand side decreases to zero as $ \ve \to 0 $,
uniformly over $t \in [0, T]$:
\begin{align*}
    \lim_{\ve \to 0}\sup_{t \in [0, T]} \|v^{\overline{\ve},\ve}_t\|_{\infty} =0 \;.
\end{align*}
Since the supremum norm (in time) dominates the Skorohod distance, the claimed
convergence in $\DD([0, T]; C_b(\RR; [0,1]))$ follows.
We observe that the same argument delivers also uniqueness of solutions, if we
replace $ u^{ \overline{\ve}} $ with any solution $u $ to \eqref{eqn:fkpp}.

 \textit{Step 2: Regularity.} Now we focus on the $C^\alpha_b$ regularity of the solutions $u^\ve$. Since the only issue for the regularity comes from the
Poisson jumps let us assume without loss of generality that $ \mf{r} = 0$. The argument we present works verbatim to obtain the required regularity for the limiting solution $u$: We provide instead a bound that is uniform in $\ve$ instead, which will be useful to obtain the martingale problem in our last step. The bound reads as follows: 
\begin{align*}
\sup_{\ve \in (0, 1)} \sup_{t \in [0, T]} \| u^{\ve}_{t} \|_{C^{\alpha}_b} < C <
\infty\;,
\end{align*}
for some \color{black} random constant $ C > 0 $.
To prove this statement we can write $ u^{\ve}$ in its mild formulation, as a
convolution with the heat semigroup:
  \begin{align*}
    u_{t}^{\ve} = P_{t} u_{0} + 
    \int_{0}^{t} \int_{(0, 1]} y P_{t - s} u_{s-}^{\ve}
(1 - u_{s-}^{\ve}) \mathcal R_{\ve}^{+}(  \ud s, \ud y \color{black}) \;.
  \end{align*}
Then we can use classical Schauder estimates, which captures the regularisation
of the Laplacian
\begin{align*}
\| P_{t} \varphi \|_{C^{\alpha}_b} \leqslant C(\alpha, \gamma) t^{- \frac{\gamma}{2}} \| \varphi \|_{\infty} \;,
\end{align*}
for some $C(\alpha, \gamma) > 0$ and all $\alpha, \gamma$ such that $\alpha, \gamma \in (0, \infty)
\setminus \NN$, $\gamma > \alpha$ and  $\alpha + \gamma \in (0, \infty) \setminus \NN$ (this follows for example from \cite[Theorem 2.24]{BAH} by embedding $L^\infty$ in a Besov space of negative regularity).

Then we find for any $2>\gamma > \alpha $
\begin{equation}\label{eqn:regularity-estimate}
\begin{aligned}
\| u_{t}^{\ve} \|_{C^{\alpha}_b}&  \lesssim \| u_{0} \|_{C^{\alpha}_b} +
\int_{0}^{t} \int_{(0,1]} y \| u^{\ve}_{ s-\color{black}} (1 - u^{\ve}_{ s-\color{black}}) \|_{\infty} (t-s)^{-
\frac{\gamma}{2}}\color{black} \mR_{\ve}^{+} ( \ud s, \ud y \color{black}) \\
& \lesssim \| u_{0} \|_{C^{\alpha}_b} + \int_{0}^{t}  \int_{(0,1]} (t-s)^{-
\frac{\gamma}{2}}\color{black}  y
\mR(  \ud s, \ud y \color{black}) \;,
\end{aligned}
\end{equation}
where the latter integral is again defined by~\eqref{eqn:Poissonintegral}, since for $\gamma \in (0, 2)$
\begin{equation*}
    \int_0^t\int_{(0,1]} (t-s)^{-
\frac{\gamma}{2}} y \  \frac{1}{y} \mf{R} (\ud y) \ud s < \infty \;.
\end{equation*}
Hence our upper
bound is proven. We observe that we can additionally deduce the
following moment bound:
\begin{equation*}
 \EE \Big[ \sup_{\ve \in (0, 1)} \sup_{0 \leqslant t \leqslant T}  \|
u^{\ve}_{t} \|_{C^{\alpha}_b}  \Big] < \infty \;.
\end{equation*}
The bound for arbitrary $\beta \geqslant 0$ follows similarly, allowing a blow-up at $t= 0$. 

\newcommand{\bphi}{\boldsymbol{\varphi}}
\textit{Step 3: Martingale problem.}  Since the proof does not vary, we restrict
to establishing this property for $ \delta =1 $.
 Hence, consider $ F $ as in
Definition~\ref{def:martingale-solutions}. It is straightforward to establish
the martingale problem for $ u^{\ve} $ (where we are in presence  of \color{black} locally
finite jumps). Namely, we have that
\begin{align*}
M^{F, \ve}_{t} : = &  F_{\boldsymbol \varphi} (u_t^{\ve})-F_{\boldsymbol \varphi}(u_0^{\ve}) - \int_0^t \mL^{1, \ve} (F_{\bphi}) (u_s^{\ve}) \ud s
\end{align*}
is a c\`adl\`ag martingale, with 
\begin{align*}
  (\mL^{1, \ve} F_{\bphi} ) (u) = \left( \sum_{i=1}^n\partial_{i} F_{\bphi} (u) \cdot \Big( \langle u, \frac{1}{2}
\Delta \varphi_{ i } \rangle  + \langle \mf{r} u(1-u) , \varphi_{ i }  \rangle  \Big) \right) 
 + \mL^{\mf R_{\ve}^{+}}( F_{\bphi} ) (u) \ud s \;,
\end{align*}
in analogy to \eqref{eqn:drift} (the notation $\mL^{1, \ve}$ is used to avoid confusion with $\mL^\delta$, since we are in the case $\delta=1$).
Moreover, $ M^{F, \ve}_{t}$ has predictable quadratic variation
\begin{align*}
\langle M^{F,\ve} \rangle_{t} = 
 \sum_{i  = 1}^{n}\int_{t_{j}}^{t}  \mL^{1,\ve} ( ( F_{\bp} )^{2} )(u_s^\ve) - 2  F_{\bp} (u_s^\ve)
\mL^{1,\ve}( F_{\bp})(u_{s}^\ve) \ud s \;.
\end{align*}
At this point we would like to establish the continuity
\begin{equation}\label{e:aim-4}
    \mL^{\mf{R^+_\ve}}( F_{\bphi} ) (u_{s}^{\ve}) \to \mL^{\mf R} ( F_{\bphi} ) (u_{s})\;, \qquad \mL^{\mf R_{\ve}^{+}}( (F_{\bphi})^2 ) (u_{s}^{\ve}) \to \mL^{\mf R} ( (F_{\bphi})^2 ) (u_{s})  \;.
\end{equation}
Since from the established convergence of $u^\ve$ we know that $F_{\bphi}(u^\ve_s) \to F_{\bphi}(u_s)$ almost surely, \eqref{e:aim-4} would guarantee that $M^{F, \ve}_{\cdot}$ converges to $M^F_{\cdot}$ almost surely, and similarly the quadratic variation at level $\ve$ would converge to the desired limiting quadratic variation almost surely. Now, to establish \eqref{e:aim-4} we can compute for any pair $u,v \in C_b(\RR;[0,1])$:
\begin{align*}
    \left\vert \mL^{\mf R_{\ve}^{+}}(F_{\bp} ) (u) - \mL^{\mf R} ( F_{\bphi} ) (v) \right\vert \leqslant & \left\vert \mL^{\mf R_{\ve}^{+}}(F_{\bp} ) (u) - \mL^{\mf R} ( F_{\bp} ) (u) \right\vert + \left\vert \mL^{\mf R} ( F_{\bp} ) (u) - \mL^{\mf R} ( F_{\bp} ) (v) \right\vert  \;.
\end{align*}
Now, for the first term we have via Remark~\ref{rem:bd-gen}
\begin{align*}
    \left\vert \mL^{\mf R_{\ve}^{+}}(F_{\bp} ) (u) - \mL^{\mf R} ( F_{\bp} ) (u) \right\vert & = \left\vert \int_{(0,\ve]} \left\{ F_{\bp}(u + y u(1-u)) - F_{\bp} (u) \right\} \frac{1}{y} \mf{R} (\ud y) \right\vert \\
    & \lesssim_{\|F \|_{C^1_b,},\|\varphi_i\|_{L^1}} \mf{R}((0,\ve]) \to 0 \;.
\end{align*}
As for the second one we have
\begin{equation}\label{e:second-term}
    \left\vert \mL^{\mf R} ( F_{\bp} ) (u) - \mL^{\mf R} ( F_{\bp} ) (v) \right\vert \leqslant \int_{(0,1]} f(u, v, y) \frac{1}{y} \mf{R}(\ud y)\;,
\end{equation}
where for $u,v \in C_b(\RR;[0,1])$ we have
\begin{align*}
    f(u, v, y) & = |\left\{ F_{\bp}(u + y u(1-u)) - F_{\bp} (u) \right\} - \left\{ F_{\bp}(v + y v(1-v)) - F_{\bp} (v) \right\}| \\
    & \leqslant C\big( \|F\|_{C^1_b}, \sum_{i=1}^n \|\varphi_i\|_{L^1} \big) \left\{ y \wedge  \|u - v\|_{\infty} \right\}\;.
\end{align*}
Then, since $\|u-v\|_{\infty} \leqslant 1$ we can split up the integral in \eqref{e:second-term} in the two intervals $[0, \sqrt{\|u-v\|_{\infty}}]$ and $[\sqrt{\|u-v\|_{\infty}}, 1]$. We then obtain a bound of the form
\begin{align*}
    \left\vert \mL^{\mf R} ( F_{\bp} ) (u) - \mL^{\mf R} ( F_{\bp} ) (v) \right\vert \lesssim_{\|F \|_{C^1_b}, \|\varphi_i\|_{L^1}} \mf{R}((0, \sqrt{\|u - v\|_{\infty}}]) + \sqrt{\|u - v \|_{\infty}} \mf{R}((0, 1])\;,
\end{align*} which converges to zero if $u \to v$ in $C_b(\RR; [0, 1])$. Here for the first term we have used Remark~\ref{rem:bd-gen}, and for the second term we have estimated
\begin{align*}
    \int_{\sqrt{\|u-v\|_{\infty}}}^1 \|u-v\|_{\infty} \frac{1}{y} \mf{R}(\ud y) \leqslant \sqrt{\|u - v\|_{\infty}} \mf{R}((0, 1]) \;.
\end{align*}
The second convergence in \eqref{e:aim-4} follows similarly. Finally, from \eqref{e:aim-4} we then find that $ M^{F,
\ve}_{\cdot} $ converges almost surely to $ M^{F}_{\cdot} $ in $ \DD([0, T]; \RR)
$, for $ M^{F} $ as in Definition~\ref{def:martingale-solutions}. To conclude
that $ M^{F} $ is a martingale, we can bound following the same arguments we just outlined:
\begin{align*}
\sup_{\ve \in (0,1)} \EE | M^{F, \ve}_{T} |^{2} < \infty \;.
\end{align*}
Hence by uniform integrability the limiting point $ M^{F} $ is also a
martingale, and a similar argument as above shows that it has the required predictable quadratic
variation.
\end{proof}

We conclude with an extension of the previous results to a case in which $F$ is not of the prescribed form $F = F_{\bp}$. To state this assertion, for $ \mathbf{x} \in \mP $ of the form $ \mathbf{x} = (x_{1}, \dots ,
x_{n}) \in \RR^{n} $, we write $$  \mathbf{x} \dagger x_{i} = (x_{1}, \dots, x_{i-1}, x_{i+1}, \dots x_{n}) \in
\RR^{n-1}.$$
In addition for $\mathbf{z} \in \mP$, we write $\mathbf{z} \subseteq \mathbf{x}$ if there exists $i_1 < \ldots < i_m$, with $m \leqslant n$, such that
\begin{align*}
    \mathbf{z} = (x_{i_1}, \ldots, x_{i_m})\; \qquad  \text{ and } \qquad \mathbf{x} \dagger \mathbf{z} = (x_i \, \colon \, i \neq i_j \;, \forall j \in \{1, \ldots, m\})\;.
\end{align*}

\begin{lemma}\label{lemma-dualityfunction}
Assume that $u$ is a martingale solution to Equation~\eqref{eqn:fkpp} with initial condition $u_0$, in the sense of Definition~\ref{def:martingale-solutions}, and $u(0,\cdot)=u_0(\cdot)$ where $u_0\in C^\alpha_b$ for $\alpha \in (0,1)$. Then, for any fixed $\delta \in (0,1], \mathbf{x} \in \mP $ and any jump time $t_j > 0$ of $ \mS_{\delta} $ we have that 
\begin{align*}
    (1- u_{t})^{\mathbf{x}} - & \int_{t_{j}}^{t}\sum_{x \in \mathbf{x}}
(1-u_{s})^{\mathbf{x} \dagger x}  \left\{ \frac{1}{2} \Delta
(1- u_{s})(x) + \mathfrak{r} \, u_s(x) (u_s(x)-1)\right\}\ud s\\
- & \int_{t_j}^t\sum_{\mathbf{z} \subseteq \mathbf{x}} \int_{(0,\delta]} y^{\ell(\mathbf{z})} (1-y)^{\ell(\mathbf{x} ) - \ell(\mathbf{z})} \left\{ (1-u_s)^{\mathbf{x} \sqcup \mathbf{z}} - (1-u_s)^{\mathbf{x}} \frac{1}{y} \mf{R}(\ud y) \right\} \ud s \; \numberthis\label{eqn:martingale1-u}
\end{align*}
is a square integrable martingale for $ t \in [t_{j}, t_{j+1}) $, with respect to the filtration $\mF^\delta$. 
\end{lemma}
Note that the quantity in~\eqref{eqn:martingale1-u} is well-defined, since $ u $ is smooth for strictly
positive times: $ u_{s} \in C^{\infty}_b(\RR) $ for any $ s > 0 $ by Proposition~\ref{prop:existence}.
\begin{proof}
Fix any non-negative smooth function $\varphi \in C_c^\infty$ such that $\int_\RR \varphi(x)\d x =1$. Then for all $\zeta>0$ and $y \in \RR$ define
\[ \varphi_{x,\zeta}(y)=\frac{1}{\zeta} \varphi\Big( \frac{1}{\zeta} (y-x) \Big). \]
Now, consider
\[ F^{(\zeta)}(v) = \prod_{x\in\mathbf x} \langle 1-v, \varphi_{x,\zeta} \rangle, \qquad v \in C_{\mathrm{loc}}(\RR). \]
Since $u$ is a martingale solution to Equation~\eqref{eqn:fkpp} with initial condition $u_0$, conditional on $S_\delta$, and since $v 
\mapsto F^{(\zeta)} (v) $ is smooth and bounded over $v\in [0,1]$, we can apply Definition~\ref{def:martingale-solutions} to obtain that
\begin{equation}\label{e:marting-approx}
\begin{aligned}
    F^{(\zeta)} (u_t) & - \int_{t_j}^t  \sum_{x \in \mathbf{x} }\Big( \prod_{y\in\mathbf x \dagger x} \langle 1-u_s, \varphi_{y,\zeta} \rangle \Big) \cdot \Big\{  \langle 1-u_{s}, \frac{1}{2}
\Delta \varphi_{x, \zeta} \rangle  + \langle \mf{r} \, u_{s}(u_{s}-1) , \varphi_{ x, \zeta}  \rangle  \ud s\Big\} \\
& - \int_{t_j}^t \int_{(0, \delta]}\Big( F^{(\zeta)}(u_s +yu_s (1-u_s)) - F^{(\zeta)}(u_s) \Big) \frac{1}{y} \mf{R}(\ud y) \ud s
\end{aligned}
\end{equation}
is a martingale on $[t_j, t_{j+1})$.
In addition, for $F^{(0)} (u) = (1-u)^{\mathbf{x}}$, there exists a constant $C>0$ such that
\begin{align*}
    \sup_{ \|u\|_{\infty}, \|w\|_{\infty} 
    \leqslant 1}|F^{(0)}(u+yw) - F^{(0)}(u)| \leqslant C y \;.
\end{align*}
Therefore, since $F^{(\zeta)} (u + y u(1-u)) = F^{(0)}(w +y r)$ with
\begin{align*}
    w(x) = \langle u, \varphi_{x, \zeta} \rangle \;, \qquad r(x) = \langle u(1-u), \varphi_{x, \zeta} \rangle \;,
\end{align*}
so that $\|w\|_{\infty}, \|r\|_{\infty} \leqslant 1$, we also have the following bound which is uniform over $\zeta$ (for the same constant $C>0$ as above):
\begin{equation}\label{e:unif-m}
      \sup_{ \zeta \in (0, 1),  \|u\|_{\infty}, \|w\|_{\infty} 
    \leqslant 1}|F^{(\zeta)}(u+yw) - F^{(\zeta)}(u)| \leqslant C y \;.
\end{equation}
Now, for $\zeta \to 0$ we have that $F^{(\zeta)}(u) \to F^{(0)} (u)$ point-wise. The uniform bound \eqref{e:unif-m} guarantees that we can pass to the limit $\zeta \to 0$ under the integral over $y$ in \eqref{e:marting-approx} and moreover via Definition \ref{def:martingale-solutions} we see that limit is still a martingale as the quadratic variation stays uniformly bounded, again by \eqref{e:unif-m}. We have therefore concluded that
\begin{align*}
    F^{(0)} (u_t) & - \int_{t_j}^t  \sum_{x \in \mathbf{x} } ( 1-u_s)^{\mathbf{x} \dagger x} \cdot \left\{ \frac{1}{2} \Delta
(1- u_{s})(x) + \mathfrak{r} \, u_s(x) (u_s(x)-1)\right\}\ud s \\
& - \int_{t_j}^t \int_{(0, \delta]}\Big( F^{(0)}(u_s +yu_s (1-u_s)) - F^{(0)}(u_s) \Big) \frac{1}{y} \mf{R}(\ud y) \ud s
\end{align*}
is a martingale.
Finally, we must obtain that the last term coincides with the term in the statement of the lemma. To see this, we  compute 
\begin{align*}
    (1 - u - yu(1-u))^{\mathbf{x}}= (1- u)^{\mathbf{x}}(1-yu)^{\mathbf{x}} = (1-u)^{\mathbf{x}}(1-y +y (1-u))^\mathbf{x} \;.
\end{align*}
Then we use the binomial formula:
\begin{align*}
    (u+v)^{\mathbf{x}}=\sum_{\mathbf{z} \subseteq \mathbf{x}} u^\mathbf{z}v^{\mathbf{x \dagger z}} \;.
\end{align*}
In particular, we obtain that
\begin{align*}
    (1-y +y (1-u))^\mathbf{x} = \sum_{\mathbf{z} \subseteq \mathbf{x}}  y^{\ell(\mathbf{z})} (1-y)^{\ell(\mathbf{x} \dagger \mathbf{z})} (1-u)^\mathbf{z} \;,
\end{align*}
so we can finally rewrite:
\begin{align*}
    \int_{(0,\delta]}(1 - u & - y(u(1-u)))^{\mathbf{x}}  -(1-u)^{\mathbf{x}} \frac{1}{y} \mf{R}(\ud y) \\
    & = \sum_{\mathbf{z} \subseteq \mathbf{x}} \int_{(0,\delta]} y^{\ell(\mathbf{z})} (1-y)^{\ell(\mathbf{x} \dagger \mathbf{z})} \left\{ (1-u)^{\mathbf{x}\sqcup \mathbf{z}} - (1-u)^{\mathbf{x}} \right\} \frac{1}{y} \mf{R}(\ud y) \;,
\end{align*}
from which our claim follows.
\end{proof}

\subsection{Duality}\label{sec:duality}

As we have already discussed, we will consider the solution
to \eqref{eqn:fkpp} conditional on the large jumps $ \mR^{+}_{\delta} $. 
In particular, the solution $ u $ to \eqref{eqn:fkpp} can be
formally rewritten as
\begin{equation}\label{eqn:fkpp-cndtnl}
\begin{aligned}
\ud u_{t} = & \frac{1}{2} \Delta u_{t} \ud t + \mf{r} \, u_{t}(1 - u_{t}) \ud t  \\
&  + \int_{(0, \delta]} y u_{t-}(1 - u_{t-}) \mR_{\delta}^{-}( \ud t, \ud y \color{black}) +
\int_{(\delta, 1]} y  u_{t-} (1 - u_{t-}) \mR^{+}_{\delta} ( \ud t, \ud y \color{black}) \;.
\end{aligned}
\end{equation}
Here the integral against $ \mR^{-}_{\delta} $ should be interpreted in the
sense of~\eqref{eqn:Poissonintegral}.  Then, let $ \EE^{\delta} $ indicate expectation conditional on $ \mS_{\delta} $
as in~\eqref{eqn:def-skeleton}, or equivalently conditional on $
\mR^{+}_{\delta} $ as in Definition~\ref{def:R-delta}:
\begin{align*}
\EE^{\delta}[f] = \EE [f \vert \mS_{\delta}] \;.
\end{align*}
For \( u \in C_{\mathrm{loc}} (\RR)\) let us recall the notation
\begin{align*}
u^{\mathbf{C}_{t}} = \prod_{i =1}^{\ell(\mathbf{C}_{t})} u(C^{(i)}_{t}) \;.
\end{align*}
We find the following duality relation.
\begin{proposition}\label{prop:cond-dual}
Fix $ \mf{R} \in \mM $ and, for any $ \delta \in (0, 1] $, let $
\mf{R}_{\delta} = (\mf{R}_{\delta}^{-}, \mf{R}_{\delta}^{+}) $ be as in
Definition~\ref{def:R-delta}. Then, let $ u $ be a martingale solution to
\eqref{eqn:fkpp-cndtnl} conditioned on $\mS^\delta$, associated to $ \mf{R} $ in the sense of
Definition~\ref{def:martingale-solutions}. Furthermore, for any $ \mathbf{x} \in \mP$, let $
(\mathbf{C}_{t})_{t \geqslant 0}$ be an $ \mf{R}_{\delta}$--CBBM started in $ \mathbf{x} $ as in Definition
\ref{def:rs-cbbm}. Then for any $ t > 0 $
\begin{align}\label{eqn:duality}
\EE^{\delta} \left[ (1 - u_{t})^{\mathbf{x}} \right]= \EE^{\delta} \big[ (1 -
u_{0})^{\mathbf{C}_{t}} \big] \;.
\end{align}
\end{proposition}

\begin{proof} 
Since whether Equation~\eqref{eqn:duality} holds only depends on the mar\-ginal laws of the couple $ ((\mathbf C_t)_{t\geqslant 0}, (u_t)_{t\geqslant 0})$ under the (random) probability measure $\PP^\delta$, we can without loss of generality assume that the two processes are independent conditional on $\mathcal S_\delta$.

Our aim is then to prove an even stronger statement, namely
that for any $ t > 0 $ the process
\begin{equation}\label{eqn:aim-duality}
[0, t] \ni s \mapsto \EE^{\delta} (1 -u_{t- s})^{\mathbf{C}_{s}} \ \text{ is constant. } 
\end{equation}
From the definition of $\mf{R}_{\delta}$-CBBM we find that
\begin{align*}
 (1 - u)^{\mathbf{C}_{t}} - & \int_{t_{j}}^{t}\sum_{x \in \mathbf{C_s}}
(1-u)^{\mathbf{C}_s \dagger x}  \left\{ \frac{1}{2} \Delta
(1- u)(x) + \mathfrak{r} \, u(x) (u(x)-1)\right\}\ud s\\
- & \int_{t_j}^t\sum_{\mathbf{z} \subseteq \mathbf{C}_{s-}} \int_{(0,\delta]} y^{\ell(\mathbf{z})} (1-y)^{\ell(\mathbf{C}_{s-} ) - \ell(\mathbf{z})} \left\{ (1-u)^{\mathbf{C}_{s-} \sqcup \mathbf{z}} - (1-u)^{\mathbf{C}_{s-}} \frac{1}{y} \mf{R}(\ud y) \right\} \ud s \;,
\end{align*}
is a square integrable martingale on $[t_j, t_{j+1})$, for any fixed $u\in C^2_b$.
In addition, by Lemma~\ref{lemma-dualityfunction} we have that also
\begin{align*}
    (1- u_{t})^{\mathbf{x}} - & \int_{t_{j}}^{t}\sum_{x \in \mathbf{x}}
(1-u_{s})^{\mathbf{x} \dagger x}  \left\{ \frac{1}{2} \Delta
(1- u_{s})(x) + \mathfrak{r} \, u_s(x) (u_s(x)-1)\right\}\ud s\\
- & \int_{t_j}^t\sum_{\mathbf{z} \subseteq \mathbf{x}} \int_{(0,\delta]} y^{\ell(\mathbf{z})} (1-y)^{|\mathbf{x} | - \ell(\mathbf{z})} \left\{ (1-u_s)^{\mathbf{x} \sqcup \mathbf{z}} - (1-u_s)^{\mathbf{x}} \frac{1}{y} \mf{R}(\ud y) \right\} \ud s 
\end{align*}
is a square integrable martingale on $[t_j, t_{j+1})$.
In particular, since the two
drifts match each other and the processes are assumed to be independent, upon taking expectations we find that for $ t \in [t_{j}, t_{j+1}) $ 
\begin{align*}
s \mapsto \EE^{\delta} (1 -u_{t- s})^{\mathbf{C}_{s}} \text{ is constant on }
[t_{j}, t]\;. 
\end{align*}

Now, 
at time $ t_{j} $, we find for $ z_{t} = 1 - u_{t} $
\begin{align*}
  (1 - u_{t_{j}})^{\mathbf{x}}& =  (1 - u_{t_{j -} } - y_{j} u_{t_{j}-}(1 -
  u_{t_{j}-}))^{\mathbf{x}} \\
  & =  ( z_{t_{j}-} - y_{j} (1 - z_{t_{j}-})z_{t_{j}-})^{\mathbf{x}} \\
  & = ( (1-y_{j}) z_{t_{j -} } + y_{j} z_{t_{j}-}^{2} )^{\mathbf{x}} \;.
\end{align*}
Hence in particular
\begin{align*}
 (1 - u_{t_{j}})^{\mathbf{x}} - (1 - u_{t_{j}-})^{\mathbf{x}}& = \bigg\{ \sum_{\mathbf{z}
\subseteq  \mathbf{x}} y_{j}^{\ell(\mathbf{z})}(1 -
y_{j})^{\ell(\mathbf{x} \dagger \mathbf{z})} z_{t_{j}-}^{\mathbf{x}\dagger
\mathbf{z}} ( z_{t_{j}-}^{2})^{\mathbf{z}} \bigg\} - z_{t_{j}-}^{\mathbf{x}} \\
& = \sum_{\mathbf{z} \subseteq  \mathbf{x}} y_{j}^{\ell(\mathbf{z})}(1 -
y_{j})^{\ell(\mathbf{x} \dagger \mathbf{z})} \bigg\{ z_{t_{j}-}^{\mathbf{x}\dagger \mathbf{z}} (
z_{t_{j}-}^{2})^{\mathbf{z}} - z_{t_{j} -}^{\mathbf{x}} \bigg\} \;,
\end{align*}
where we used that 
\begin{align*}
\sum_{\mathbf{z} \subseteq  \mathbf{x}} y_{j}^{\ell(\mathbf{z})}(1 -
y_{j})^{\ell(\mathbf{x} \dagger \mathbf{z})} = 1\;.
\end{align*}
This corresponds again to the branching mechanism of $ \mathbf{C}_{t} $, so
that we can deduce that $ \EE^{\delta} (1 - u_{t-s})^{\mathbf{C}_{s}} $ is
constant for $ s \in (0, t) $. Taking the limit $ s \downarrow 0 $ and $ s
\uparrow t $ delivers the result on the closed interval $ [0, t] $.
\end{proof}

\section{Wave speed}\label{sec:wavespeed}

\subsection{Conditional dual}

Apart from the results of Section~\ref{sec:existenceduality}, the last ingredient we will need in the study of the wave speed is the following
almost sure asymptotic property for $ \mS_{\delta}$.
\begin{lemma}\label{lem:asympt-skltn}
For any $ \delta \in (0, 1) $ and $ \ve \in [0, \infty) $ let $ \mS_{\delta} $ be defined as in
\eqref{eqn:def-skeleton} and associated to $ \mf{R}_{\delta} =
(\mf{R}^{-}_{\delta}, \mf{R}^{+}_{\delta}) $ as in Definition~\ref{def:R-delta}. Then for
\begin{align*}
  \mf{d}_{\delta, \ve}(t) = \sum_{j  \, \colon \, t_{j} \leqslant t} \log(1 + y_{j}
+ \ve)\;,
  \quad \mf{d}_{\delta, \ve} = \int_{(\delta,1]} \log{ (1+y+ \ve) }\frac{1}{y}
\mf{R} ( \ud y)\;,
\end{align*}
it holds that almost surely that
\begin{align*}
  \lim_{t \to \infty} \frac{1}{t} \mf{d}_{\delta, \ve}(t) = \mf{d}_{\delta, \ve} \; .
\end{align*}
\end{lemma}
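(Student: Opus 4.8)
The plan is to recognise $ \mf{d}_{\delta, \ve}(\cdot) $ as a compound Poisson process with nonnegative, uniformly bounded jumps, and to apply the strong law of large numbers. First I would record that, by Definition~\ref{def:R-delta}, $ \mS_{\delta} $ is a Poisson point process on $ [0, \infty) \times (\delta, 1] $ whose time intensity $ \Lambda_{\delta} := \int_{(\delta, 1]} \frac{1}{y} \mf{R}(\ud y) $ is \emph{finite}, since $ \frac{1}{y} \leqslant \frac{1}{\delta} $ on $ (\delta, 1] $ and $ \mf{R} $ is a finite measure. If $ \Lambda_{\delta} = 0 $, then $ \mS_{\delta} = \emptyset $ and both $ \mf{d}_{\delta, \ve}(t) $ and $ \mf{d}_{\delta, \ve} $ vanish, so the claim is trivial; hence I assume $ \Lambda_{\delta} > 0 $, in which case there are infinitely many points and $ t_{j} \uparrow \infty $. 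I would also note that $ t \mapsto \mf{d}_{\delta, \ve}(t) $ is non-decreasing and that the limiting constant is finite, namely $ \mf{d}_{\delta, \ve} \leqslant \delta^{-1} \log(2 + \ve)\, \mf{R}((\delta, 1]) < \infty $.

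Next I would slice time into unit intervals. Set $ D_{k} = \mf{d}_{\delta, \ve}(k) - \mf{d}_{\delta, \ve}(k - 1) = \sum_{j \,\colon\, t_{j} \in (k-1, k]} \log(1 + y_{j} + \ve) $ for $ k \in \NN $. By the independence and stationarity of the increments of a Poisson point process, the random variables $ (D_{k})_{k \in \NN} $ are i.i.d.; each $ D_{k} $ is almost surely a finite sum of terms lying in $ [\log(1 + \delta + \ve),\, \log(2 + \ve)] $, so in particular $ \EE[D_{1}] < \infty $, and by Campbell's formula (cf.\ \cite[Section~3]{K93}), applied to the function $ (t,y) \mapsto \log(1+y+\ve) $, one computes $ \EE[D_{1}] = \int_{(\delta, 1]} \log(1 + y + \ve) \frac{1}{y} \mf{R}(\ud y) = \mf{d}_{\delta, \ve} $. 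The classical strong law of large numbers then yields $ \frac{1}{n} \mf{d}_{\delta, \ve}(n) = \frac{1}{n} \sum_{k = 1}^{n} D_{k} \to \mf{d}_{\delta, \ve} $ almost surely along $ n \in \NN $.

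Finally I would pass from integer to real times by a sandwich argument exploiting monotonicity: for $ t \geqslant 1 $, writing $ n = \floor{t} $,
\[
\frac{n}{t} \cdot \frac{\mf{d}_{\delta, \ve}(n)}{n} \;\leqslant\; \frac{\mf{d}_{\delta, \ve}(t)}{t} \;\leqslant\; \frac{n + 1}{t} \cdot \frac{\mf{d}_{\delta, \ve}(n + 1)}{n + 1}\;,
\]
and both outer terms converge to $ \mf{d}_{\delta, \ve} $ as $ t \to \infty $, since $ n/t \to 1 $, $ (n+1)/t \to 1 $ and $ \mf{d}_{\delta, \ve}(m)/m \to \mf{d}_{\delta, \ve} $ along integers $ m $. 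This completes the argument. I do not expect any serious obstacle: the only points needing a word of care are the degenerate case $ \mf{R}((\delta,1]) = 0 $, the finiteness of $ \mf{d}_{\delta,\ve} $, and the interpolation step above, all of which are routine. (An alternative route, which avoids invoking Campbell's formula directly, would combine the renewal strong law $ N(t)/t \to \Lambda_{\delta} $ for the counting process $ N(t) = \#\{ j \colon t_{j} \leqslant t\} $ with the strong law for the i.i.d.\ marks $ \log(1 + y_{j} + \ve) $, and multiply.)
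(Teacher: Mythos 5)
Your argument is correct and complete: recognising $\mf{d}_{\delta,\ve}(\cdot)$ as a compound Poisson process with finite jump intensity (since $\tfrac1y \le \tfrac1\delta$ on $(\delta,1]$) and bounded jumps, computing $\EE[D_1]=\mf{d}_{\delta,\ve}$ by Campbell's formula, applying the SLLN to the i.i.d.\ unit-time increments, and interpolating by monotonicity is exactly the standard route. The paper in fact states this lemma without proof, implicitly treating it as the classical law of large numbers for compound Poisson processes; your write-up supplies precisely the omitted details, including the degenerate case $\mf{R}((\delta,1])=0$.
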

For simplicity we will write $ \mf{d}_{\delta}(t)$ for $
\mf{d}_{\delta, 0}(t) $ in the case $ \ve =0 $ (and similarly for $
\mf{d}_{\delta} $).

\begin{proof}
We have
\[ \frac{1}{t} \mf{d}_{\delta, \ve}(t) = \Big( \frac{1}{\# \{ j  \, \colon \, t_{j} \leqslant t \} }\sum_{j  \, \colon \, t_{j} \leqslant t} \log(1 + y_{j}
+ \ve) \Big) \frac{\# \{ j  \, \colon \, t_{j} \leqslant t \} }{t} \;.\]
Now, the first factor on the right-hand side converges a.s.\ to $\frac{\int_{(\delta,1]} \frac{\log(1+y+\eps)}{y} \mf R(\d y)}{\int_{(\delta,1]} \frac{1}{y} \mf R(\d y)}$ due to the strong law of large numbers, while the second factor converges a.s.\ to $\int_{(\delta,1]} \frac{1}{y} \mf R(\d y)$ due to the Poisson law of large numbers \cite[Section 4.2]{K93}.
\end{proof}
\color{black}

\subsection{Upper bound on the wave speed}

We start by establishing an upper bound on the quenched (w.r.t.\ jumps larger than $\delta$) growth rate of the dual
process.
\begin{proposition}\label{prop:globalsurvival}Fix $ \mf{R} \in \mM $ and, for any $ \delta \in (0, 1] $, let $
\mf{R}_{\delta} = (\mf{R}_{\delta}^{-}, \mf{R}_{\delta}^{+}) $ be as in
Definition~\ref{def:R-delta}. Let $ \mathbf{C}_{t} $ be an $
\mf{R}_{\delta} $--CBBM started in $ \mathbf{x}\in
\RR^{1} $. Then for $ I_{t}^{\delta} = \ell(\mathbf{C}_{t}) $ we have almost surely
\[ \limsup_{t\to\infty} \frac{1}{t} \log I_t^{\delta} \leqslant
 \mf{R}_{\delta}^{-}([0, \delta]) + \int_{(\delta, 1]} \log{ (1+y) }
 \frac{1}{y} \mf{R}^{+}_{\delta} ( \ud y) =: \mf{c}_{\delta} \;.
\numberthis\label{eqn:logspeed} \]
\end{proposition}
Note that since $ \log{(1 + y)} \frac{1}{y} \leqslant 1$ we always have
\begin{align*}
\mathfrak{R}(\{0\}) + \int_{(0,1]} \log{(1 + y)} \frac{1}{y} \mf{R}(\ud y) \leqslant \mf{c}_{\delta}
\;,
\end{align*}
which reflects the fact that in the following we obtain an \emph{upper} bound on the wave speed. On the other hand, as $\delta \to 0$ we obtain 
\begin{equation}\label{e:mfc}
    \mf{c}_\delta \to \mathfrak{R}(\{0\})+\int_{(0,1]} \log{(1 + y)} y^{-1}\mf{R}(\ud y) = \mf{c} = \frac{\mathfrak{s}^2}{2}\;,
\end{equation}
where $\mathfrak{s}$ is the wave speed defined by \eqref{eqn:def-speed}.
\begin{remark}\label{rem:lower-bound}
Although we only prove an upper bound on the growth rate of $
I^{\delta}_{t} $, the arguments we present for the lower bound of the wave
speed allow to also prove a lower bound to the growth of $ I^{\delta} $ by comparing $ I^{\delta} $ to the
a process where we do not have any jumps with impact $ y \in (0, \delta] $
(and everything else being left unvaried). Alternatively, one can also use the same
comparison, but in combination with the ``channelling'' argument that we will
use in Section~\ref{sec:invexp}. Overall, one would thus obtain that almost surely
(independently of $ \delta $!)
\begin{align*}
\lim_{t \to \infty} \frac{1}{t} \log{I^{\delta}_{t}} = \mf{c}\;,
\end{align*}
with $\mf{c}$ as in \eqref{e:mfc}.
\end{remark}
The proof of Proposition~\ref{prop:globalsurvival} will be carried out in
Section~\ref{sec:invexp}.
Using the previous result, we obtain an upper bound on the wave speed via a
quenched version of the so-called many-to-one lemma, cf.\ \cite[Section 3.6]{Berestycki}.

\begin{proposition}\label{prop:manytoone}
Fix $ \mf{R} \in \mM $ and, for any $ \delta \in (0, 1] $, let $
\mf{R}_{\delta} = (\mf{R}_{\delta}^{-}, \mf{R}_{\delta}^{+}) $ be as in
Definition~\ref{def:R-delta}.
Let $ \mathbf{C}_{t} $ be an $ \mf{R}_{\delta}$--CBBM
started in $ \mathbf{x} = 0 \in \RR^{1} $. Then for any $ x_{0} \in \RR $ and $ \mathbf{S}_{t} = \max
\mathbf{C}_{t} $ we have, for any $ \lambda > \sqrt{2
\mf{c}_{\delta}} $, $ \PP $--almost surely
\[ \lim_{t \to \infty} \PP^{\delta}( \mathbf{S}_{t} > \lambda t + x_{0}) = 0 \;. \] 
\end{proposition}
\begin{proof}
Without loss of generality we restrict to the case $ x_{0}=0 $.
Let us also write $\mathbf C_t=(C_t^{(i)})_{i \in [I_t^{\delta}]}$ for $t>0$
and $ I^{\delta}_{t} = \ell( \mathbf{C}_{t})$, where we assume
that the ordering of particles is exchangeable (which can e.g.\ be achieved via
reshuffling the indices at the time of any reproduction event uniformly in a
right-continuous manner).
Then, at any given time $t \geq 0$, conditional on $
I^{\delta}_{t} $, the particles
are identically (but not independently) distributed and their
 marginal law is that of a Brownian motion at time $t$, so that
\begin{align*}
\PP^{\delta}( C^{(i)}_{t} > x \vert I^{\delta}_{t}) =  \Phi ( x / \sqrt{t})\color{black}\;,
\end{align*}
{with $\Phi(z) = \mathbf{P}(\mN \geqslant x)$ for a standard Gaussian $\mN$.}
Hence, conditional on the jump times, we obtain that for any $t \geq 0$ and $\eps>0$\color{black},
\[ 
\begin{aligned}\label{manytoonequenched}
\P^{\delta}(\mathbf S_t > x | I^{\delta}_{t} ) &= \P^{\delta}\big(\exists i \in [I^{\delta}_t] \colon
C_t^{(i)} > x \big| I^{\delta}_{t} \big) \\
& \leq \E^{\delta} \Big[ \sum_{i \in
[I^{\delta}_t]} \mathds
1_{\{C_t^{(i)}>x\}} \Big| I^{\delta}_{t} \Big] \mathds 1_{ \{I^{\delta}_{t}
 \leqslant \color{black} \mathrm e^{ (\mf{c}_{\delta} + \ve )
t}\}} + \mathds 1_{\{ I^{\delta}_{t}  > \color{black} \mathrm e^{(\mf{c}_{\delta} + \ve) t} \}} \\
&  \leqslant  I^{\delta}_t \,  \Phi ( x / \sqrt{t}) \mathds 1_{\{ I^{\delta}_{t}
\leqslant\mathrm e^{( \mf{c}_{\delta} + \ve) t } \}} +
\mathds 1_{\{ I^{\delta}_{t} >\mathrm e^{( \mf{c}_{\delta} + \ve) t} \}}\;.
\end{aligned} \]
Thus, the Gaussian tail bound
\[ \int_{t}^\infty\color{black} \frac{1}{\sqrt{2\pi}} \e^{-y^2/2} \d y \leqslant \frac{1}{\sqrt{2\pi}t} \e^{-\frac{t^2}{2}}, \qquad t \geq 0 \]
implies that for $\eps,\lambda>0$
\[ \P^{\delta}(\mathbf S_t > \lambda t | I^{\delta}_t) \mathds 1_{\{ I^{\delta}_{t}
\leqslant  \mathrm e^{( \mf{c}_{\delta} + \ve)
t} \}} \leq \frac{1}{\sqrt{2\pi t}\lambda} \e^{ ( \mf{c}_{\delta} + \ve) t-\frac{\lambda^2 t}{2}} \]
holds almost surely, whence for $\lambda>\sqrt{2( \mf{c}_{\delta} +\eps)}$
we obtain by Proposition~\ref{prop:globalsurvival} that almost surely
\begin{align*}
 \lim_{t\to\infty} \P^{\delta}(\mathbf S_t>\lambda t) \leqslant \lim_{t \to \infty}
\bigg\{ \frac{1}{\sqrt{2\pi t}\lambda} \e^{ ( \mf{c}_{\delta}+\eps) t-\frac{\lambda^2 t}{2}} +
\PP^{\delta} (I^{\delta}_{t}>\mathrm e^{(
\mf{c}_{\delta} + \ve) t}) \bigg\} =0 \;. \numberthis\label{Mtlimit}
\end{align*}
Since $ \ve >0 $ can be chosen arbitrarily small, the result follows.
\end{proof}

\subsection{Lower bound on the wave speed} 
To obtain the lower bound let us introduce the sequence of constants
\begin{equation}\label{eqn:c-delta-low}
\mf{r} + \int_{(\delta, 1]} \log{(1 + y)} \frac{1}{y} \mf{R}^{+}_{\delta}
(\ud y) =: \underline{\mf{c}}_{\delta} \;,
\end{equation}
which immediately satisfy $ \underline{\mf{c}}_{\delta} \leqslant
\frac{\mf{s}^{2}}{2}$.
Then the main result of this section is the next proposition.
\begin{proposition}\label{prop:rightmost-speed}
Fix $ \mf{R} \in \mM $ and, for any $ \delta \in (0, 1] $, let $
\mf{R}_{\delta} = (\mf{R}_{\delta}^{-}, \mf{R}_{\delta}^{+}) $ be as in
Definition~\ref{def:R-delta}. Let $ \mathbf{C}_{t} $ be an $ (\mf{r} \delta_{0}, \mf{R}_{\delta}^{+}) $--CBBM started in $
\mathbf{x} = 0 \in \RR^{1} $. Then for any $ 0< \lambda < \sqrt{2
\underline{\mf{c}}_{\delta}}  $ and $ x_{0} \in \RR $, we have $ \lim_{t \to \infty}
\PP^{\delta}( \mathbf{S}_{t} > \lambda t + x_{0}) =1 $, $\PP$--almost surely.
\end{proposition}

{We chose as approximating sequence the ordered pair of measures $\{(\mf{r}\delta_0, \mf{R}_\delta^+)\}_{\delta \in (0, 1]}$, so that the difference to the original measure is just given by $\mf{R}\vert_{(0, \delta]}$, which vanishes as $\delta \to 0$: In particular, this is why we included the mass at zero in our approximation.}
To prove this result, let us associate to the $ (\mf{r} \delta_{0},
\mf{R}^{+}_{\delta}) $--CBBM $ \mathbf{C}_{t} $ started in $ \mathbf{x} = 0 \in \RR^{1} $ a measure-valued
process 
\begin{equation}\label{eqn:mvp} 
X_{t} = \sum_{i = 1}^{n(t)} \delta_{x_{i}(t)} \;,  
\end{equation}
where we assume that at time $ t \geqslant 0 $, $ \mathbf{C}_{t} = (x_{1}(t), \dots, x_{n(t)}(t)) $.
Then, in the spirit of \cite{Kyprianou, Englander} we will link the wave speed to the local survival of the $
X_{t} ( \cdot + \lambda t) $.

\begin{proof}
Consider the measure-valued process $ X $ of \eqref{eqn:mvp}.
Now, let $ I \subseteq \RR $ be any compact interval (that is a set of the form $ I =
[a, b] $ for $ a < b $) and observe
that the following implication holds, as long as \( \lambda^{\prime} > \lambda \), for any $x_0 \in \RR $:
\begin{align*}
\left\{ \liminf_{t \to \infty} X_{t}(I + \lambda^{\prime} t) > 0 \right\}
\Longrightarrow \left\{ \liminf_{t \to \infty} \left( \mathbf{S}_{t} - \lambda t +
x_{0} \right) > 0 \right\} \;.
\end{align*}
In particular, our result follows if there exists a family $$\{ R_{\lambda, \zeta} \colon \quad \lambda \in (0, \sqrt{2
\underline{ \mf{c}}_{\delta}}) \;, \zeta \in (0,1)\}$$
of positive, $\mS_\delta$ adapted random variables such that for the intervals 
$$I_{\lambda, \zeta} = [- R_{\lambda, \zeta}, R_{\lambda, \zeta}]$$
the following is satisfied:
\begin{align*}
\PP^{\delta}\left( \liminf_{t \to \infty} X_{t}(I_{\lambda, \zeta} + \lambda t) >0
\right) \geqslant 1- \zeta, \quad \PP\text{--almost surely,} \quad \text{ for all } 0 < \lambda <
\sqrt{2 \underline{ \mf{c} }_{\delta}} \;.
\end{align*}
This is exactly the content of Lemma~\ref{lem:survival2} below. 
\end{proof}

\begin{lemma}\label{lem:survival2}
Let $ X $ be the measure-valued process of \eqref{eqn:mvp}. Then 
for any $ 0< \lambda < \sqrt{2 \underline{\mf{c}}_{\delta}}$ and $\zeta \in (0, 1)$ there exists an $\mS_\delta$--adapted positive random variable $R_{\lambda, \zeta}$
such that $ \PP $--almost surely $$\PP^{\delta} \big( \liminf_{t \to \infty}
X_{t}(I_{\lambda, \zeta} + \lambda t) > 0 \big) \geqslant 1 - \zeta \;,$$
with $I_{\lambda, \zeta} = [-R_{\lambda, \zeta}, R_{\lambda, \zeta}]$.
\end{lemma}

\begin{proof}

First of all we observe that instead of considering
$ X_{t}(A + \lambda t) $ for all measurable $A \subseteq \RR$ (that is shifting the sets we are measuring) we can and will consider
Brownian motions with a drift $ \lambda $ to the left in the definition of $X_{t}$ and measure $X_t(A)$ instead. Consider intervals of the form $ I(R) = [-R, R]$ for $ R > 0 $: below we will choose a
sufficiently large value $ R(\lambda, \zeta) $ and prove the desired result for $
I_{\lambda, \zeta} = I (R (\lambda, \zeta)) $.

\textit{Step 1: Continuity of the principal eigenvalue.} For any $ R > 0 $ let $ (P^{\lambda, R}_{t})_{t \geqslant 0} $
be the heat semigroup with drift $ \lambda $ and Dirichlet boundary conditions on $ \partial I(R) $, acting on
$ L^{2}(I(R)) $. Namely, for any $ \varphi \in L^{2}(I(R)) $, $
P_{t}^{\lambda, R} \varphi $ satisfies $ P_{0}^{\lambda, R} \varphi = \varphi$
and solves
\begin{align*}
  \partial_{t} P_{t}^{\lambda, R} \varphi (x) = \Big( \frac{1}{2} \Delta +
  \lambda \partial_{x} \Big) P^{\lambda,
  R}_{t} \varphi (x), \quad x \in (-R, R), \ \
  P_{t}^{\lambda, R} \varphi (\pm R) =0, \ \
  \forall t > 0.
\end{align*}
Let us denote with $ \mu(\lambda,R) = \sup \frac{1}{t} \log \sigma ( P^{\lambda,
R}_{t} ) $ the principal eigenvalue of $ \frac{1}{2} \Delta + \lambda
\partial_{x} $ on $ I(R) $ with Dirichlet boundary conditions
(here $ \sigma $ indicates the spectrum, and the definition of $ \mu $ does not depend on $ t >0 $). 
Then, we observe that by \cite[Section 4, Theorem 4.1]{PinskyBook}
\begin{equation}
  \lim_{R \to \infty} \mu(\lambda, R) =  - \frac{\lambda^{2}}{2} \;,
\end{equation}
the latter being the principal (generalised) eigenvalue of $ \frac{1}{2}
\Delta + \lambda \partial_{x} $ on $ \RR $.

In particular, for any $ \lambda < \lambda^\prime < \sqrt{2 \underline{\mf{c}}_{\delta}}$ we can find a $R_0(\lambda^\prime)$ such that  $\mu(\lambda, R) \geqslant - \frac{(\lambda^\prime)^2}{2}$ for all $R \geqslant R_0 (\lambda^\prime)$.

\textit{Step 2: Local survival.} We now consider $ \lambda^\prime $ and $ R_0(\lambda^\prime) $ as
above and define $ I_{0} = I_0 (\lambda, \lambda^\prime) = [- R_0 (\lambda^\prime), R_0 (\lambda^\prime)] $. Then we introduce a new process $ \overline{X}_{t} $ in which particles
evolve as in $ X_{t} $ but are killed (i.e.\ the disappear from the measure) upon reaching the
boundary of $ I_{0} $. By comparison we obtain that $
\overline{X}_{t} \leqslant X_{t} $ in the sense of positive measures. We will
then start by considering
\begin{equation*}
  \eta \stackrel{\rm{def}}{=} \PP^{\delta} \big( \liminf_{t \to \infty} \overline{X}_{t}(I_{0}) > 0 \big)\;,
\end{equation*}
and proving that $\eta$ is an $\mS_\delta$-adapted random variable satisfying
\begin{equation}\label{eqn:aim}
  \PP(\eta > 0 ) = 1\;.
\end{equation}
To prove this result let us fix $ \varphi $ the eigenfunction on $
L^{2}(I_{0}) $ associated to $ \mu = \mu(\lambda, R_0(\lambda^\prime)) $ (note that $
\varphi (x) > 0 $ for $ x \in (-R_0(\lambda^\prime), R_0(\lambda^\prime)) $ by the
Krein--Rutman theorem, and $ \varphi \in C^{\infty}_b( (- R_0(\lambda^\prime),
R_0(\lambda^\prime))) $ via classical regularity estimates), so that we can write the martingale problem for
$\overline{X}_{t}(\varphi) $ as follows. 
Next consider the jump times $ \{t_{j}\}_{j \in \NN} $ associated to $ \mS_{\delta} $. If we fix some $ j \in \NN $,
then we have that for $ t_{j} \leqslant t < t_{j+1} $ the process
\begin{align*}
  [t_{j}, t_{j+1}) \ni t \mapsto\mathrm e^{ - (\mf{r} +
\mu )t}\overline{X}_{t}(\varphi )  = M_{t}^{j}
\end{align*}
is a c\`adl\`ag martingale under $\PP^\delta$ on $ [t_{j}, t_{j+1}) $, with predictable quadratic variation
\begin{align*}
\langle M^{j} \rangle_{t} = \int_{t_{j}}^{t}\mathrm e^{- 2 (\mf{r} + \mu )r} \Big[ \mf{r} \, \overline{X}_{r} (
 \varphi^{2} ) + \overline{X}_{r}(
(\partial_{x} \varphi)^{2} ) \Big] \ud r\;,
\end{align*}
where the first term comes from independent reproduction and the second one
from the spatial motion of the particles. Next we consider the jumps at times
$ t_{j} $. We have
\begin{align*}
e^{-(\mf{r} + \mu)t_j} \left\{ \overline{X}_{t_{j}}(\varphi)- (1 + y_{j})
\overline{X}_{t_{j}-}(\varphi) \right\}= \Delta N_{j}\;,
\end{align*}
where $ \Delta N_{j} $ is a martingale increment. Since every particle alive a
time $ t_{j} $ reproduces with probability $ y_{j} $ independently of all other
particles, $ N_{j} $ has the
variance of a Bernoulli random variable with parameter $ y_{j} $:
\begin{align*}
\langle \Delta N_{j} \rangle = e^{-2(\mf{r} + \mu) t_j} y_{j}(1 - y_{j})
\overline{X}_{t_{j}-}(\varphi^{2})\;.
\end{align*}
Overall we can now conclude that the following is a c\`adl\`ag martingale  on $[0,\infty)$ under $\P_\delta$\color{black}:
\begin{align*}
L_{t} =\mathrm e^{- (\mf{r}+ \mu)t} \bigg( \prod_{j  \, \colon \, t_{j}
\leqslant t}
(1 + y_{j})^{-1}  \bigg) \overline{X}_{t}(\varphi)\;.
\end{align*}
In fact, for any $ 0 \leqslant s < t < \infty $ with $ j(t) \in \NN $ uniquely
defined by $ t \in [t_{j(t)}, t_{j(t)+1}) $, and assuming that $ j(s) < j(t) $
(otherwise the martingale property is inherited immediately from $
M^{j(t)} $) and
with the notation $ \Delta M_{j} = M^{j}_{t_{j+1}-} - M_{t_{j}}^{j} $, we find
that
\begin{align*}
L_{t} - L_{s} = & \bigg( \prod_{j  \leqslant j(t)}
(1 + y_{j})^{-1}  \bigg) \bigg[ (M^{j(t)}_{t} - M_{t_{j(t)}}^{j(t)}) + \Delta
N_{j} \bigg] \\
& + \sum_{\ell = j(s)+1}^{j(t)-1} \bigg( \prod_{j  \leqslant \ell}
(1 + y_{j})^{-1}  \bigg) \bigg[ \Delta M_{\ell} + \Delta N_{\ell} \bigg]\\
& + \bigg( \prod_{j  \leqslant j(s)}
(1 + y_{j})^{-1}  \bigg) (M^{j(s)}_{t_{j(s)+1}-} - M_{s}^{j(s)}) \;,
\end{align*}
which is a sum of martingale increments. Hence we have found a positive
martingale $ L_{t} $, which implies that there exists an almost sure limit $
\lim_{t \to \infty} L_{t} =  L_{\infty} \in [0, \infty) $. We want to prove
that $ \EE^{\delta} L_{\infty} = L_{0} > 0 $, which amounts to proving that the
martingale is uniformly integrable. Hence we will show that
\begin{equation}\label{eqn:aim-qv}
\sup_{t \geqslant 0} \EE^{\delta} L_{t}^{2} = L_{0} + \sup_{t \geqslant 0} \EE^{\delta} \langle
L \rangle_{t} < \infty\;.
\end{equation}
We are thus left with computing the expected quadratic variation.

Let us now follow the notation of Lemma~\ref{lem:asympt-skltn} and write $
e^{\mf{d}_{\delta}(t)} = \prod_{j \leqslant j(t)} (1 + y_{j}) $. Then for $
t \in [t_{j}, t_{j+1})$ we find that
\begin{align*}
\ud \langle L \rangle_{t} =\mathrm e^{- 2( \mf{r}  + \mu) t - 2
\mf{d}_{\delta}(t)} \bigg( \Big[ \overline{X}_{t}( \mf{r}\varphi^{2} +
( \partial_{x} \varphi)^{2} ) \Big] + y_{j}(1 - y_{j})
\overline{X}_{t_{j} -} ( \varphi^{2} ) \delta_{t_{j}} (t) \bigg) \ud t \;.
\end{align*}
The last ingredient  to bound the expected quadratic variation $ \EE^{\delta} \langle L_{t} \rangle $ is to bound
the expected value \color{black} of $ \overline{X}_{t} $. From the definition of $ \overline{X} $ we
find for $ s \in [0, t] $ and any $ \psi \in L^{2}(I_{0}) $
\begin{equation}\label{eqn:contrl}
\ud \EE^{\delta} \overline{X}_{s} ( P^{\lambda, R_0(\lambda^\prime)}_{t- s}\psi) = \mf{r} \EE^{\delta}
\overline{X}_{s} ( P^{\lambda, R_0(\lambda^\prime)}_{t- s}\psi) \ud s + \sum_{j \in \NN }
y_{j} \EE^{\delta}  \overline{X}_{s} ( P^{\lambda, R_0(\lambda^\prime)}_{t- s}\psi)
\delta_{t_{j}}(s) \ud s \;.
\end{equation}
Hence for $ \psi = \varphi $ we find
\begin{align*}
\EE^{\delta} \overline{X}_{t}(\varphi) =\mathrm e^{(\mf{r}+ \mu)t +
\sum_{j \leqslant j(t)} \log{ (1 +y_{j} )}} \varphi (0)\;.
\end{align*}
In particular, we can rewrite $ \EE^{\delta} \langle L \rangle_{t} $ as the sum of two
terms:
\begin{align*}
  \EE^{\delta} \langle L \rangle_{t} = & \int_{0}^{t}\mathrm e^{- 2( \mf{r}  + \mu) s - 2
  \mf{d}_{\delta}(s)} \bigg( \mf{r}\EE^{\delta} \overline{X}_{s}( \varphi^{2})  + y_{j}(1 - y_{j})
\EE^{\delta} \overline{X}_{t_{j} -} ( \varphi^{2} ) \delta_{t_{j}} (s) \bigg) \ud s \\
& + \int_{0}^{t}\mathrm e^{- 2( \mf{r}  + \mu) s - 2
  \mf{d}_{\delta}(s)} \EE^{\delta} \overline{X}_{s} ( ( \partial_{x} \varphi)^{2} )\ud s \;.
\end{align*}
The difference between the first and second line is that in the first line we
can estimate $ \overline{X}_{s}(\varphi^{2}) \leqslant \| \varphi \|_{\infty} \overline{X}_{s}(\varphi) $, which is not
possible for the term in the second line since $ \varphi (\pm R_0 (\lambda^\prime)) =0 $, which does not hold
for $ (\partial_{x} \varphi)^{2} $ (so here we will need some additional
arguments). To fix the key point of the proof let us start with the first
term. Using the previous computations we find
\begin{align*}
  \int_{0}^{t} &\mathrm e^{- 2( \mf{r}  + \mu) s - 2 \mf{d}_{\delta}(s)} \bigg(
  \mf{r}\EE^{\delta} \overline{X}_{s}( \varphi^{2})  + y_{j}(1 - y_{j}) \EE^{\delta}
\overline{X}_{t_{j} -} ( \varphi^{2} ) \delta_{t_{j}} (s) \bigg) \ud s \\
& \lesssim_{\mf{r}, \| \varphi \|_{\infty}} \int_{0}^{t} \exp \left\{ 
  \Big( - ( \mf{r}  + \mu)  - \mf{d}_{\delta} + o(1) \Big) s \right\} \ud s \;.
\end{align*}
Here the $ o(1) $ term is intended as $ s \to \infty $, and is a consequence of
Lemma~\ref{lem:asympt-skltn}. Now since by assumption $\lambda < \lambda^\prime < \sqrt{2\underline{\mf{c}}_{\delta}}$ we have
\begin{align*}
- \mf{r} - \mf{d}_{\delta} -\mu = - \underline{\mf{c}}_{\delta} - \mu \leqslant - \underline{\mf{c}}_{\delta} + \frac{(\lambda^\prime)^2}{2}
\overset{\rm{def}}{=} - \ve < 0 \;.
\end{align*}
In particular, the integral under consideration is
converging for $t \to \infty$. Now, if we pass to the term involving $ \EE^{\delta} \overline{X}_{s} ( ( \partial_{x} \varphi)^{2})
$, we find by \eqref{eqn:contrl}
\begin{align*}
\EE^{\delta} \overline{X}_{s} ( ( \partial_{x} \varphi)^{2} ) =
e^{\mf{r} s + \mf{d}_{\delta}(s)} P_{s}^{\lambda,
R_0(\lambda^\prime)}(\partial_{x} \varphi)^{2}(0)\;.
\end{align*}
Now at time $ s=1 $ we can control the semigroup $P_{1}^{\lambda,
R_0(\lambda^\prime)}(\partial_{x} \varphi)^{2} \leqslant C_{\lambda, R_0}
\varphi $, for some $ C_{\lambda, R_0}> 0 $: indeed both $P_{1}^{\lambda,
R_0(\lambda^\prime)}(\partial_{x} \varphi)^{2}$ and $\varphi$ are strictly positive in the interior of $I(R_0(\lambda^\prime))$, vanish at the boundary and are \emph{differentiable} at the boundary (differentiability follows for example from \cite[Theorem 1.1]{KRY}), so that the named constant must exist. Hence the term can be
controlled following the same arguments as above, observing that
\begin{align*}
\EE^{\delta} \overline{X}_{s} ( ( \partial_{x} \varphi)^{2} ) \leqslant
e^{\mf{r} s + \mf{d}_{0}s + \mu(s-1)} C_{\lambda, R_0}
\varphi(0)\;,
\end{align*}
which is of the same order as the bound used in the previous discussion.
Hence \eqref{eqn:aim-qv} is proven, and
since $ L_{0} > 0 $ we deduce  \eqref{eqn:aim} from the fact that $\PP$-almost surely
\begin{align*}
\PP^{\delta}( \liminf_{t \to \infty} X_{t}( I_{0} ) >0)
\geqslant \PP^{\delta} (\liminf_{t \to \infty} L_{t} > 0) > 0\;.
\end{align*}
Here we observe that the inclusion 
\begin{align*}
\left\{\liminf_{t \to \infty} L_{t} > 0 \right\} \subseteq \left\{ \liminf_{t
\to \infty} X_{t}( I_{0} ) >0\right\}
\end{align*}
holds because in the definition of $ L_{t} $ we find, by our assumptions on $
\mu $, that
\begin{align*}
e^{- (\mf{r}+ \mu)t} \bigg( \prod_{j  \, \colon \, t_{j} \leqslant t}
(1 + y_{j})^{-1}  \bigg)  \leqslant\mathrm e^{- \ve t + o(1)}\;,
\end{align*}
as $ t \to \infty $.

\textit{Step 3: Almost sure survival.} Now we want to use  \eqref{eqn:aim} to prove that if we choose a suitable larger random interval $I_{\lambda, \zeta}$, depending on $\lambda$ and $\zeta \in (0,1)$, then
$$\PP^{\delta} \big( \liminf_{t \to \infty} X_{t}(I_{\lambda, \zeta}) > 0 \big)\geqslant 1- \zeta\;, \qquad \PP\text{--almost surely}\;.$$
For this purpose let us write, for any $n\in \NN$ the interval
$$I^n_0=[-n R_0(\lambda), nR_0(\lambda)]\;,$$
and consider $\overline{X}^n_t$ the process in which particles
evolve as in $ X_{t} $ but are killed upon reaching the
boundary of $ I_{0}^n $. Next, note that we have
\begin{equation*}
   \PP^{\delta} \big( \liminf_{t \to \infty} \overline{X}^n_{t}(I_{0}^n) > 0 \big)= \PP^\delta (\tau^n = \infty)\;,
\end{equation*}
where $\tau^n$ is the extinction time $\tau^n = \inf \{ t \geq 0 \colon \; \overline{X}^n_t(I_0^n) = 0\}$ and the equality holds because for $t < \tau^n$ we have $\overline{X}_t^n (I_0^n) \geqslant 1$. Now let us prove that
\begin{equation}\label{e:aim-2}
    \PP^\delta (\tau^n < \infty) \leqslant \PP^\delta (\tau^1 < \infty)^n = (1 - \eta)^n\;,
\end{equation}
with $\eta$ as in \eqref{eqn:aim}.
Indeed, let us consider $ \overline{X}^{n,1}_t \leqslant \overline{X}^n_t$ the process in which particles
evolve as in $ \overline{X}^n_{t} $ but are killed upon reaching the
boundary of $ I_{0} $, coupled so that particles in $\overline{X}^{n,1}_t$ are exactly particles of $\overline{X}^n_t$ that have never left the interval $I_0$. By construction
\begin{align*}
    \tau^{n,1} < \tau^n \;,
\end{align*}
where $\tau^{n,1}$ is the extinction time of $\overline{X}^{n,1}$. This means that on the event $\tau^n < \infty$ we have at time $\tau^{n,1}$ at least one particle of $\overline{X}^n_{\tau^{n,1}}$ either in $-R_0(\lambda^\prime)$ or in $R_0(\lambda^\prime)$. Say the latter is the case and suppose that $n\geqslant 2$, then we can consider the process $\overline{X}^{n,2}_t \leqslant \overline{X}^n_t$ for $t \geqslant \tau^{n,1}$, started with exactly that particle in $R_0(\lambda^\prime)$ and in which particles are killed upon reaching the boundary of $[0, 2R_0(\lambda^\prime)]$. Observe that $\overline{X}^{n,2}_{\tau^{n,1}+t} (\cdot + R_0(\lambda^\prime))$ has the same law as $\overline{X}^{n,1}_t$ 
as in the previous step.  If we let $
\tau^{n, 2}$ be the extinction time of $\overline{X}^{n,2}$, then we obtain
\begin{align*}
    \PP^\delta ( \tau^n < \infty) & \leqslant \PP^\delta (\tau^{n,1} < \infty) \cdot \EE^\delta[ \PP^\delta (\tau^{n,2} < \infty \vert \mF_ {\tau^{n,1}})] \\
    & = \left[ \PP^\delta (\tau^1 < \infty) \right]^2= (1- \eta)^2\;,
\end{align*}
with $\mF_t$ the filtration generated by $\overline{X}^n$ and $\eta$ the random variable from \eqref{eqn:aim} (the last line follows from the strong Markov property). We can iterate this procedure at least $n$ times, so that \eqref{e:aim-2} is proven. If we choose $n = n(\eta, \zeta)$ (hence $n$ is random) so that
\begin{align*}
    (1- \eta)^n  \leqslant \zeta \;,
\end{align*}
then the claimed result follows.

\end{proof}

\subsection{Quenched growth rate}\label{sec:invexp}

 Our goal in this section is to verify Proposition~\ref{prop:globalsurvival}.
Recall that, as in Definition~\ref{def:R-delta}, $I_t^{\delta}= \ell(\mathbf C_t)$ for $t\geq 0$, where $(\mathbf C_t)_{t\geq
0}$ is an $ (\mf{R}_\delta^-, \mf{R}_\delta^+ ) $--CBBM, and recall that we use
the notation
\[ \mf{c}_{\delta} = \mf{R}^{-}_{\delta}([0, \delta]) + \int_{(\delta, 1]} \log{(1
+ y)} \frac{1}{y} \mf{R}^{+}_{\delta}( \ud y) \numberthis\label{eqn:cdef} \;. \] 
Now, Proposition~\ref{prop:globalsurvival} is equivalent to the next lemma.
\begin{lemma}\label{lem:speed-support}
In the setting of Proposition~\ref{prop:globalsurvival}, for any $ \ve >0 $ we
have that $ \PP $--almost surely
\begin{align*}
\PP^{\delta}( \limsup_{t \to \infty} I_{t}^{\delta} e^{- (\mf{c}_{\delta} +
\ve)t} > 0 )& = 0 \;.
\numberthis\label{speed+eps}
\end{align*}
\end{lemma}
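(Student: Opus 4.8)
The plan is to exploit two facts. First, the particle count $I_t^\delta = \ell(\mathbf{C}_t)$ is nondecreasing in $t$, since the CBBM of Definition~\ref{def:rs-cbbm} has no death mechanism; hence it is enough to control $I_n^\delta$ along integer times $n$ and then interpolate. Second -- and this is the decisive point, and the reason for conditioning on the large jumps in the first place -- the \emph{quenched} first moment $\EE^\delta[I_t^\delta]$ grows at rate exactly $\mf{c}_\delta$, whereas the \emph{annealed} first moment grows at the strictly larger rate $\mathbf{r}$ of Proposition~\ref{prop-invexp}. Given these, the conclusion follows from a conditional Markov inequality together with the Borel--Cantelli lemma applied under the regular conditional probability $\PP^\delta$.

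First I would compute $g(t) := \EE^\delta[I_t^\delta]$ from the conditional generator of $(\mathbf{C}_t)$. On an interval $[t_j, t_{j+1})$ between consecutive points of $\mS_\delta$ the only transitions are the small reproduction events: with $n$ particles present, a fixed $k$-subset reproduces, adding $k$ particles, at rate $\int_{[0,\delta]} y^{k-1}(1-y)^{n-k}\,\mf{R}_\delta^-(\ud y)$, so the expected rate of increase of the count is
\[ \sum_{k=1}^{n} k\binom{n}{k} \int_{[0,\delta]} y^{k-1}(1-y)^{n-k}\,\mf{R}_\delta^-(\ud y) = n\,\mf{R}_\delta^-([0,\delta]) \;, \]
using the elementary identity $\sum_{k=1}^{n} k\binom{n}{k} y^{k-1}(1-y)^{n-k} = n$ (the mean of a $\mathrm{Bin}(n,y)$ variable divided by $y$, valid also at $y=0$ by continuity). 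Hence $g'(t) = \mf{R}_\delta^-([0,\delta])\,g(t)$ on each such interval. At a point $t_j$ of $\mS_\delta$ with impact $y_j$, every particle reproduces independently with probability $y_j$, so $\EE^\delta[I_{t_j}^\delta \mid \mathcal{F}_{t_j-}] = (1+y_j)\,I_{t_j-}^\delta$. Combining the two -- with a routine localization at the stopping times $T_N = \inf\{ t : I_t^\delta \geqslant N \}$ and $N \to \infty$, which also yields non-explosion -- gives, for $\mf{d}_\delta(t)$ as in Lemma~\ref{lem:asympt-skltn} with $\ve = 0$,
\[ \EE^\delta[I_t^\delta] = I_0\, \mathrm{e}^{\mf{R}_\delta^-([0,\delta])\, t} \prod_{j\,:\,t_j \leqslant t} (1+y_j) = I_0\, \mathrm{e}^{\mf{R}_\delta^-([0,\delta])\, t + \mf{d}_\delta(t)} \;, \qquad I_0 = 1 \;. \]

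Now fix $\ve > 0$. The conditional Markov inequality gives, at integer times,
\[ \PP^\delta\big( I_n^\delta > \mathrm{e}^{(\mf{c}_\delta + \ve/2) n}\big) \leqslant \mathrm{e}^{-(\mf{c}_\delta + \ve/2)n}\,\EE^\delta[I_n^\delta] = \exp\Big\{ n\big( \mf{R}_\delta^-([0,\delta]) + \tfrac{1}{n} \mf{d}_\delta(n) - \mf{c}_\delta - \tfrac{\ve}{2} \big) \Big\} \;. \]
Since $\mf{c}_\delta = \mf{R}_\delta^-([0,\delta]) + \mf{d}_\delta$ by \eqref{eqn:cdef} (note $\mf{R}_\delta^+$ is the restriction of $\mf{R}$ to $(\delta,1]$) and $\tfrac{1}{n} \mf{d}_\delta(n) \to \mf{d}_\delta$ $\PP$-almost surely by Lemma~\ref{lem:asympt-skltn}, the exponent equals $n\big(\tfrac{1}{n}\mf{d}_\delta(n) - \mf{d}_\delta - \tfrac{\ve}{2}\big) \leqslant -n\ve/4$ for all large $n$; hence $\PP$-almost surely the right-hand sides are summable in $n$. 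By the first Borel--Cantelli lemma applied under $\PP^\delta$ we obtain that, $\PP$-almost surely, $\PP^\delta$-almost surely there is a random $n_0$ with $I_n^\delta \leqslant \mathrm{e}^{(\mf{c}_\delta + \ve/2)n}$ for all $n \geqslant n_0$. Finally, for $t \in [n, n+1]$ with $n \geqslant n_0$, monotonicity gives $I_t^\delta \leqslant I_{n+1}^\delta \leqslant \mathrm{e}^{(\mf{c}_\delta + \ve/2)(n+1)}$, so $I_t^\delta\, \mathrm{e}^{-(\mf{c}_\delta + \ve)t} \leqslant \mathrm{e}^{(\mf{c}_\delta + \ve/2)(n+1) - (\mf{c}_\delta + \ve)n} = \mathrm{e}^{(\mf{c}_\delta + \ve/2) - (\ve/2)n} \to 0$ as $t \to \infty$, which is precisely \eqref{speed+eps}.

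The routine ingredients -- Markov, Borel--Cantelli, the interpolation via monotonicity -- are painless. The step I expect to require real care is the quenched first-moment identity: the combinatorial collapse of the small-jump part of the generator to the clean rate $\mf{R}_\delta^-([0,\delta])$, and, more delicately, the localization argument guaranteeing that $(\mathbf{C}_t)$ does not explode in finite time (so that the compensator/ODE manipulation producing $g$ is legitimate) even when $\mf{R}_\delta^-$ charges a neighbourhood of $0$. One could alternatively invoke Proposition~\ref{prop-invexp} (or \cite[Lemma 3]{GKT}) on each interval between large jumps and multiply the resulting factors across the jump times, but the direct computation seems cleaner.
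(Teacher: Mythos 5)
Your proof is correct, but it takes a genuinely different route from the paper's. You compute the \emph{quenched} first moment exactly, $\EE^{\delta}[I_t^{\delta}] = \mathrm e^{\mf{R}_{\delta}^-([0,\delta])\,t+\mf{d}_{\delta}(t)}$ (the binomial identity collapsing the small-jump generator to rate $n\,\mf{R}_{\delta}^-([0,\delta])$ is right, and the factor $(1+y_j)$ at each large jump is deterministic under $\PP^{\delta}$, so the mean telescopes with no Jensen gap), and then conclude by Markov's inequality along integer times, Borel--Cantelli under $\PP^{\delta}$, Lemma~\ref{lem:asympt-skltn}, and monotonicity of $I_t^{\delta}$ for the interpolation. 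The paper instead works pathwise: it factorises $I_t$ into between-jump ratios $I_{t_j-}/I_{t_{j-1}}$ and at-jump ratios $I_{t_j}/I_{t_j-}$, controls the former by a positive discrete-time martingale (built from the same first-moment fact, Proposition~\ref{prop-invexp}) and martingale convergence, and controls the latter by a binomial large-deviation (``channelling'') argument showing $R_j/I_{t_j-}\simeq y_j$ eventually. Both arguments rest on the same two ingredients --- rate-$\mf{R}_{\delta}^-([0,\delta])$ mean growth between large jumps and the exact factor $1+y_j$ at large jumps, plus the LLN of Lemma~\ref{lem:asympt-skltn} --- but yours packages them through a single moment bound, which is shorter and arguably cleaner for the upper bound. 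What the paper's heavier pathwise concentration buys is reusability: the channelling step gives two-sided control of $I_{t_j}/I_{t_j-}$ and hence feeds the matching lower bound on the growth of $I^{\delta}$ (cf.\ Remark~\ref{rem:lower-bound}), which a one-sided Markov bound cannot deliver. The one point you should make fully rigorous is the localization/non-explosion step legitimising the first-moment ODE when $\mf{R}_{\delta}^-$ charges a neighbourhood of $0$; you flag it correctly, and it is at the same level of rigour as the paper's own appeal to Proposition~\ref{prop-invexp} on each inter-jump interval.
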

The proof will follow two different arguments for small and large
jumps. For small jumps we use a martingale approach: this leads to the term $
\mf{R}^{-}_{\delta}([0, \delta]) $ in our wave speed upper bound. In
particular, the martingale argument is not exact and delivers only a rough
upper bound (but as $ \delta \downarrow 0 $ this error will be
negligible). Instead for large jumps our argument is exact and builds on a time
change argument, which is possible since jump times are now discrete.

\begin{proof}
For brevity, let us write $ I_{t} $ for $ I_{t}^{\delta} $ and denote with $
j(t) = \max \{ j  \ \colon \ t_{j} \leqslant t \} $. We have
\begin{align*}
I_{t} = \frac{I_{t}}{I_{t_{j(t)}}} \cdot \bigg( \prod_{j \leqslant j(t) } \frac{I_{t_{j}-}}{I_{t_{j-1}}}  \bigg) \cdot \bigg( \prod_{j \leqslant j(t)}
\frac{I_{t_{j}}}{I_{t_{j}-}} \bigg) \cdot I_{0} \;,
\end{align*}
with the convention that $t_0=0$. \color{black}
Hence our result will follow if we prove the following three inequalities
\begin{align*}
\limsup_{t \to \infty} \frac{1}{t} \sum_{j = 1}^{j(t)} \log{\frac{I_{t_{j}-}}{I_{t_{j-1}}}}
& \leqslant \mf{R}_{\delta}([0, \delta]) \;, \\
\limsup_{t \to \infty} \frac{1}{t} \sum_{j = 1}^{j(t)} \log{\frac{I_{t_{j}}}{I_{t_{j}-}}}
& \leqslant \int_{( \delta, 1]} \log{(1 + y)} \frac{1}{y}
\mf{R}^{+}_{\delta} (\ud y) \;, \\
\limsup_{t \to \infty} \frac{1}{t} \log{\frac{I_{t}}{I_{t_{j(t)}}}} & = 0 \;,
\end{align*}

with the convention that $t_0=0$\color{black}.
We observe that the last equality 
follows analogously to the first inequality, so we restrict to proving the first two points.

\textit{Step 1: Martingale term.} Let us start by proving the first bound. We can define the following discrete
time process:
\begin{align*}
M_{n} =\mathrm e^{- t_{n} \mf{R}_{\delta}([0, \delta])}
\prod_{j=1}^{n}\frac{I_{t_{j}-}}{I_{t_{j-1}}}\;,
\end{align*}
and we observe that $ (M_{n})_{n \in \NN_0\color{black}} $ is a discrete-time \color{black} $\mathbb P^\delta$-\color{black}martingale with
respect to the filtration $ (\mF_{t_{n}} )_{n \in \NN_0\color{black}}, $ where $
\mF_{t} $ is the filtration generated by $ (\mathbf{C}_{s})_{s \leqslant t} $.
To see that the martingale property holds, we observe that $
\mathbf{C}_{t} $ has the law of an $ (\mf{R}_\delta^-, 0) $--CBBM
on every time interval $ [t_{j}, t_{j+1}) $. In particular, by
Proposition~\ref{prop-invexp}, we see that
\begin{align*}
\EE^{\delta\color{black}} \big[ M_{n} \vert \mF_{t_{n-1}\color{black}} \big] =\mathrm e^{\mf{R}_{\delta}([0, \delta])(t_{n} -
t_{n-1})}\mathrm e^{-t_{n} \mf{R}_{\delta}([0, \delta])} \prod_{j =1}^{n-1}
\frac{I_{t_{j}-}}{I_{t_{j-1}}}= M_{n-1}.
\end{align*}
Since $ M_{n} $ is a positive martingale, it follows from the martingale
convergence theorem and Fatou's lemma that
\begin{align*}
\EE^{\delta\color{black}} \big[ \lim_{n \to \infty} M_{n} \big] \leqslant \liminf_{n \to \infty} \EE^{\delta\color{black}}
M_{n} \leqslant \EE^{\delta\color{black}} M_{0} =1 \;.
\end{align*}
We conclude that almost surely, for any $\ve > 0$
\begin{align*}
\limsup_{n \to \infty} \frac{1}{t_{n}} \sum_{j=1}^{n} \log{
\frac{I_{t_{j}-}}{I_{t_{j-1}}} } \leqslant \mf{R}_{\delta}([0, \delta]) +\ve\;,
\end{align*}
which proves the first bound (note that $ \frac{t_{j(t)}}{t} \to 1 $ as $ t \to
\infty $).

\textit{Step 2: Large jumps.} Here we use a different argument, based on large
deviation principles. In a nutshell, we will prove that as long as the solution
is growing exponentially fast at the correct order, such exponential growth
becomes ever more likely (and precise) in future. We observe that the process
$ I_{t} $ does not depend on the spatial dynamics of the particles. In
particular, for every $ j \geqslant 1 $, the increment
$ \frac{I_{t_{j}}}{I_{t_{j}-}} $
depends only on the number $ R_{j} $ of particles that participate in the $j$-th
reproduction event:
\[  \frac{I_{t_{j}}}{I_{t_{j}-}} = 1 + \frac{R_{j}}{I_{t_{j}-}}\;.\] 
Recall also that every particle reproduces independently of any other particle
at time $ t_{j} $, with probability $ y_{j} \in (\delta, 1] $. Our aim is then
to prove that as $ I_{t_{j}-} $ increases, the approximation
\[ \frac{R_{j}}{I_{t_{j}-}} \simeq y_{j} \] 
becomes ever more likely and precise.
Following this description, let us consider, for some \( \ve \in (0, 1] \)
and $ M \in \NN $
\[ \mathcal{G}_{0, M} := \Big\{  \Big\vert \frac{R_{j}}{I_{t_{j}-}} - y_{j}
\Big\vert \leqslant \ve, \ \forall j  \in \{ 1, \dots,
M \} \Big\} \;, \]
where the letter $ \mG  $ stands for being a ``good'' set. We can then find a
$ c (\ve)>0 $ such that for $I_0 \geqslant 1$
\begin{align*}
\PP^{\delta} ( \mathcal{G}_{0,M} ) & \geqslant  \PP^{\delta}(\mG_{0, M} \vert \mG_{0, M-1})
\PP^{\delta}(\mG_{0, M-1} )\\
& \geqslant \left(1 - \exp\left\{ - c (\ve) I_{0} \prod_{j =1}^{M-1}(1
+y_{j} - \ve) \right\} \right) \PP^{\delta} (\mG_{0, M-1}) \;,
\end{align*}
where we used that on the set $ \mG_{0, M-1} $ we have $ I_{t_{M}-} \geqslant
I_{0} \prod_{j =1 }^{M-1}(1 +y_{j}- \ve) $ (note that all other reproduction
events, not due to large jumps, only  increase the value of $
I_{t} $), together with the following large deviations bound \eqref{e:aim-3} for an i.i.d.\ sequence $\{X_i\}_{i \in \NN}$ of Bernoulli random variables of parameter $p$:
\begin{equation}\label{e:aim-3}
 \forall n \geqslant 1\;, \ve \in (0, 1]\;, p \in [0,1]\;, \quad \exists c(\ve) > 0 \quad \text{s.t.} \quad \PP \Big( \Big\vert \frac{1}{n} \sum_{i=1}^{n} X_{i} - p \Big\vert \geqslant \ve
\Big) \leqslant\mathrm e^{- c( \ve) n }\;.
\end{equation}
Indeed, for $p \in [0,1]$ and $\eps \in (0,1]$ we have that for all $n \in \NN \setminus \{0\}$
\begin{equation}\label{Cramér}
\begin{aligned}
  \P \Big( \Big| \frac{1}{n} \sum_{i=1}^n X_i - p \Big| \geq \eps \Big)
 &= \P \Big(  \frac{1}{n} \sum_{i=1}^n X_i  \leq p-\eps \Big)+ \P \Big(  \frac{1}{n} \sum_{i=1}^n X_i  \geq p+\eps \Big) 
    \\ & \leqslant \e^{-n \bar c(p-\eps,p)}+\e^{- n\bar c(p+\eps,p)},
\end{aligned}
\end{equation}
where the rate function is given by the relative entropy
\[ \bar c(a,b) = \begin{cases}
a \log \frac{a}{b} - (1-a) \log \frac{1-a}{1-b}, &\text{ if } a \in [0,1], \\
\infty, &\text{ otherwise}, \end{cases}  \]
for $b\in[0,1]$, using the convention that $0\log 0 = 0\log \frac{0}{0}=0$. Here the second line of~\eqref{Cramér} follows from Markov's inequality.
Thus, in order to show \eqref{e:aim-3}
it suffices to verify that 
\begin{equation*}
c(\ve) \overset{\mathrm{def}}{=}\inf_{p \in [0,1]} \min \Big\{ \bar c(p-\eps,p), \bar c(p+\eps,p) \Big\}>0\;, \end{equation*}
which follows from the observation that $\bar c(a,b)=0$ if and only if $a=b \in [0,1]$, since $a \mapsto \bar c(a,b)$ is convex for fixed $b \in [0,1]$, and since $(a,b) \mapsto \bar c(a,b)$ is continuous on $(0,1)^2$.
Iterating the previous bound down to $ M =1 $ we obtain, always assuming $
I_{0} \geqslant 1$
\begin{align*}
\PP^{\delta} (\mG_{0, M}) & \geqslant \prod_{n = 1}^{M} \left(1 - \exp\left\{ - c (\ve)
I_{0} \prod_{j =1}^{n-1}(1 +y_{j} - \ve) \right\} \right) \\
& \geqslant 1 - \sum_{n=1}^{M} \exp\left\{ - c (\ve) I_{0} \prod_{j =1}^{n-1}(1
+y_{j} - \ve) \right\}  \;.
\end{align*}
Here we made use of the inequality $ \prod_{n=1}^{M}(1 - \alpha_{n}) \geqslant
1 - \sum_{n=1}^{M} \alpha_{n} $, which holds for any sequence $
\alpha_{n} \in (0, 1) $ by iterating the following bound:
\begin{align*}
\prod_{n = 1}^{M} (1 - \alpha_{n}) = \prod_{n=2}^{M} (1 - \alpha_{n}) -
\alpha_{1} \prod_{n =2}^{M} (1 - \alpha_{n}) \geqslant
\prod_{n=2}^{M}(1 - \alpha_{n}) - \alpha_{1} \;.
\end{align*}
Finally, by monotonicity, with $ \mG_{0, \infty} = \bigcap_{M \in \NN}
\mG_{0, M} $, we find that for $\ve \in (0, \delta/2)$
\begin{align*}
I_{0} \geqslant n_{0} \quad \Longrightarrow \quad \PP^{\delta}(\mG_{0 , \infty})
\geqslant  1 - \sum_{n = 1}^{\infty} \exp\left\{ - c (\ve) n_0 \prod_{j =1}^{n-1}(1
+y_{j} - \ve) \right\}  \overset{\rm{def}}{=} \mf{p} (n_0, \ve) \in \RR\;.
\end{align*}
The fact that the sum is
converging follows for example from the condition $\ve \in (0, \delta/2)$, so that it can be bounded from below by $1 - \sum_n \exp \{- c(\ve) n_0 (1 + \delta/2)^n\}> -\infty$. In particular, since for any $n_0 \in \NN$ and $I_0 \geqslant 1$ we have
\[\PP ( \exists j \in \NN \text{ such that } I_{t_j} \geqslant n_0) = 1\;,\]
we obtain by the strong Markov property that for any $\ve \in (0, \delta/2)$ and $\mf{p}(n_0, \ve)$ as above
\begin{align*}
    \PP \left( \left\vert \frac{R_j}{I_{t_j-}} - y_j \right\vert \leqslant \ve\;, \text{ for all but finitely many } j \in \NN \right) \geqslant \mf{p}(n_0, \ve) \;, \qquad \forall n_0 \in \NN\;.
\end{align*}
Now, since $\lim_{n_0 \to \infty} \mf{p} (n_0, \ve) = 1$ for all $\ve \in (0, \delta/2)$, we conclude
\begin{align*}
    \PP \left( \left\vert \frac{R_j}{I_{t_j-}} - y_j \right\vert \leqslant \ve\;, \text{ for all but finitely many } j \in \NN \right) = 1\;.
\end{align*}
We therefore deduce that on a set of full probability
\begin{align*}
\limsup_{ t \color{black} \to \infty} \frac{1}{t} \sum_{j =1}^{j(t)} \log
\frac{I_{t_{j}}}{I_{t_{j}-}} & \leqslant \int_{(\delta, 1]} \log{(1 + y + \ve)}
\frac{1}{y} \mf{R}_{\delta}^{+}( \ud y)\\
& \leqslant \int_{(\delta, 1]} \log{(1 + y)}
\frac{1}{y} \mf{R}_{\delta}^{+}( \ud y) + \ve \int_{(\delta, 1]}
\frac{1}{1 + y} \frac{1}{y} \mf{R}_{\delta}^{+}(\ud y) \;.
\end{align*}
Since $\eps>0$ was arbitrary, this \color{black} concludes the proof of the second inequality, and thereby of the lemma.
\end{proof}

\bibliographystyle{plain}

\begin{acks}
TR gratefully acknowledges financial support through the Royal Society grant 
RP\textbackslash R1\textbackslash 191065. We thank J.~Blath,  M.~Hammer, \color{black} N.~Hansen, F.~Hermann, A.~González Casanova, N.~Kurt,  B.~Mallein, \color{black} F.~Nie and M.~Wilke Berenguer for interesting discussions and comments. We also thank two anonymous reviewers for insightful suggestions and corrections.
\end{acks}



\end{document}